\documentclass{article}
\usepackage[T1]{fontenc}
\usepackage[utf8]{inputenc}
\usepackage[style=alphabetic,sorting=nyt,giveninits=true]{biblatex}
\usepackage{geometry}
\usepackage{parskip}
\usepackage{amsmath}
\usepackage{amssymb}
\usepackage{amsthm}
\usepackage{hyperref}
\usepackage[capitalize,nameinlink,noabbrev]{cleveref}
\usepackage{mathtools}
\usepackage{commath}
\usepackage{bm}
\usepackage{enumitem}
\usepackage{tcolorbox}
\usepackage{microtype}
\usepackage[hang,flushmargin]{footmisc}
\usepackage{thmtools} 
\usepackage{thm-restate}
\usepackage{tikz}

\tikzset{
  graphvertex/.style={
    circle,
    draw=black,
    fill=black,
    inner sep=0pt,
    minimum size=5pt
  },
  mygraph/.style={
    every node/.style={graphvertex},
    label distance=0.15cm,
    line width=0.25mm
  }
}

\makeatletter
\g@addto@macro\bfseries{\boldmath}
\makeatother

\makeatletter
\def\thmCite{\@ifnextchar[{\@with}{\@without}}
\def\@with[#1]#2{{\normalfont\cite[#1]{#2}\;}}
\def\@without#1{{\normalfont\cite{#1}\;}}
\makeatother

\ExplSyntaxOn
\DeclareExpandableDocumentCommand{\IfNoValueOrEmptyTF}{mmm}
 {
  \IfNoValueTF{#1}{#2}
   {
    \tl_if_empty:nTF {#1} {#2} {#3}
   }
 }
\ExplSyntaxOff

\newcommand*{\valOrBlank}[1]{\IfNoValueOrEmptyTF{#1}{}{#1}}

\NewDocumentCommand{\mySum}{e{_^}}{%
	\sum_{\mathclap{\valOrBlank{#1}}}^{\mathclap{\valOrBlank{#2}}}\hspace{0.025cm}%
}

\NewDocumentCommand{\myCap}{e{_^}}{%
	\bigcap_{\mathclap{\valOrBlank{#1}}}^{\mathclap{\valOrBlank{#2}}}\hspace{0.05cm}%
}

\tcbset{sharp corners, colback=white, colframe=black, boxrule=0.4pt,parbox=false}

\makeatletter
\newcommand*{\defeq}{\hspace{-0.04cm}\mathrel{\rlap{%
                     \raisebox{0.4ex}{$\scriptstyle\m@th\cdot$}}%
                     \raisebox{-0.05ex}{$\scriptstyle\m@th\cdot$}}%
				 =}
\makeatother

\begingroup
\makeatletter
\@for\theoremstyle:=definition,remark,plain\do{%
	\expandafter\g@addto@macro\csname th@\theoremstyle\endcsname{%
		\addtolength\thm@preskip\parskip
	}%
}
\endgroup

\let\oldproof\proof
\def\proof{\oldproof\unskip}

\DeclareMathOperator{\hgt}{height}
\DeclareMathOperator{\inord}{in}
\DeclareMathOperator{\scnum}{sc}
\DeclareMathOperator{\scstar}{sc\hspace{-0.02cm}\textsuperscript{$\ast$}\hspace{-0.0375cm}}
\DeclareMathOperator{\LF}{\mathsf{LF}}

\DeclarePairedDelimiter{\smallAbs}{|}{|}

\newtheorem{theorem}{Theorem}[section]
\newtheorem{lemma}[theorem]{Lemma}
\newtheorem{corollary}[theorem]{Corollary}
\newtheorem{proposition}[theorem]{Proposition}
\newtheorem{definition}[theorem]{Definition}

\newtheorem{notation}[theorem]{Notation}
\theoremstyle{definition}
\newtheorem{example}[theorem]{Example}

\theoremstyle{remark}
\newtheorem*{note}{Note}

\newcommand{\blockComment}[1]{}

\newcommand{\code}[1]{{\normalfont\ttfamily #1}}

\DeclareTextFontCommand{\emph}{\boldmath\bfseries}

\newcommand{\citelink}[2]{{\hypersetup{linkbordercolor={0 1 0}}\hyperlink{cite.\therefsection @#1}{#2}}}

\newcommand*{\citeMacaulay}{\citelink{M2}{\code{Macaulay2}}}

\AtBeginBibliography{\vspace*{8pt}}

\DeclareFieldFormat[article,book,incollection,misc,unpublished]{title}{\mkbibitalic{#1}}
\DeclareFieldFormat[article]{journaltitle}{#1\isdot}
\DeclareFieldFormat[article,incollection]{booktitle}{\normalfont{#1}}

\title{Binomial Edge Ideals of K\"{o}nig Type}
\author{David Williams}
\date{\vspace{-0.3cm}}

\begin{document}

\maketitle

\begin{abstract}
	\noindent We first characterise graphs with binomial edge ideals of K\"{o}nig type as those for which the path covering number is equal to a minor variant of the scattering number.

	\vspace{0.35\baselineskip}
	\noindent This enables us to apply known graph-theoretic results to immediately deduce that several classes of graphs have binomial edge ideals of K\"{o}nig type. In particular, we show this for cocomparability graphs, or weakly closed graphs in the language of Matsuda (\cite{matsudaWeaklyClosedGraphs2018}).

	\vspace{0.35\baselineskip}
	\noindent Along with work of LaClair and McCullough (\cite[Corollary 10.4]{laclairFpurityBinomialEdge2026}), this allows us to prove that an unmixed binomial edge ideal $\mathcal{J}_G$ is of K\"{o}nig type if and only if $G$ is weakly closed.

	\vspace{0.35\baselineskip}
	\noindent We then conjecture that AT-free graphs have binomial edge ideals of K\"{o}nig type.
\end{abstract}

\begin{tcolorbox}
	Unless specified otherwise, let
	\begin{equation*}
		R=k[x_1,\ldots,x_n,y_1,\ldots,y_n]
	\end{equation*}
	for some field $k$ (of arbitrary characteristic) and $n\geq 2$.

	Throughout this paper, all graphs will be finite, simple and undirected. For a graph $G$, we denote by $V(G)$ and $E(G)$ the sets of vertices and edges $\{i,j\}$ of $G$ respectively. Unless specified otherwise, all graphs will be non-empty and have at most $n$ vertices, with vertex set $\{1,\ldots,\smallAbs{V(G)}\}$.

	We denote by $P_m$, $C_m$ and $K_m$ the path, cycle and complete graphs on $m$ vertices respectively, and by $K_{a,b}$ the complete bipartite graph with partitions of size $a$ and $b$. Furthermore, for any $S\subseteq\{1,\ldots,n\}$, we denote by $K_S$ the complete graph with vertex set $S$.
\end{tcolorbox}

\section{Introduction}\label{sec:intro}

In 1931, K\"{o}nig\footnote{\noindent D\'{e}nes K\H{o}nig (1884--1944) was a Hungarian mathematician whose name is more properly written with \textit{\H{o}}. However, since much of the literature uses \textit{\"{o}}, we adopt it here for consistency.} proved in \cite{konigGraphokEsMatrixok1931} that, for any bipartite graph, the matching number\footnote{The maximum size $\smallAbs{S}$ of a set $S\subseteq E(G)$ such that no two edges in $S$ share an endpoint (a \textit{matching}), denoted by $\nu(G)$, or occasionally $m(G)$.} is equal to the vertex cover number\footnote{The minimum size $\smallAbs{S}$ of a set $S\subseteq V(G)$ such that every edge of $G$ has an endpoint in $S$ (a \textit{vertex cover}), denoted by $\tau(G)$.}. This became known as \textit{K\"{o}nig's Theorem}, or sometimes the \textit{K\"{o}nig-Egerv\'{a}ry Theorem}, since the result was proved independently (and in the more general case of weighted graphs) in the same year by Egerv\'{a}ry in \cite{egervaryMatrixokKombinatoriusTulajdonsagairol1931}. This motivated the study of the general class of graphs for which these invariants agree, most commonly called \textit{K\"{o}nig graphs}.

It was shown by Crupi, Rinaldo and Terai in \cite[Theorem 0.2]{crupiCohenMacaulayEdgeIdeal2011} that the Cohen-Macaulayness of (monomial) edge ideals of K\"{o}nig graphs is independent of the characteristic of the base field. Herzog, Hibi and Moradi sought to generalise the properties of K\"{o}nig graphs which afforded this characteristic independence to arbitrary graded ideals in a polynomial ring over a field. To this end, they introduced in \cite[Definition 1.1]{herzogGradedIdealsKonig2022} the class of \textit{(graded) ideals of K\"{o}nig type}.

They proved in \cite[Theorem 3.5(b)]{herzogGradedIdealsKonig2022} that the binomial edge ideal $\mathcal{J}_G$ of a graph $G$ is of K\"{o}nig type if and only if $G$ contains a linear forest (or semi-path) with $\hgt_R(\mathcal{J}_G)$ edges. Alternative combinatorial characterisations were given by LaClair in \cite[Lemma 5.3]{laclairInvariantsBinomialEdge2025} and the author in \cite[Theorem 2.4]{williamsLFCoversBinomialEdge2023}.

Furthermore, Herzog, Hibi and Moradi showed in \cite[Proposition 3.6]{herzogGradedIdealsKonig2022} that graphs for which every connected component is traceable (that is, containing a Hamiltonian path) have binomial edge ideals of K\"{o}nig type, and moreover that, if a binomial edge ideal $\mathcal{J}_G$ is unmixed, then $\mathcal{J}_G$ is of K\"{o}nig type if and only if every connected component of $G$ is traceable.

\pagebreak

\subsection*{Summary of Results}

The main result of this paper is a characterisation of binomial edge ideals of K\"{o}nig type in purely graph-theoretic terms, building on Herzog, Hibi and Moradi's characterisation \cite[Theorem 3.5]{herzogGradedIdealsKonig2022}. Just as edge ideals $I(G)$ of K\"{o}nig type can be characterised via the equality of the matching and vertex cover numbers of $G$, we show that binomial edge ideals $\mathcal{J}_G$ of K\"{o}nig type can be characterised via the equality of two other well-studied graph-theoretic invariants: the \textit{path cover number} $\pi(G)$ (see \cref{def:pi}), and a minor variant of the \textit{scattering number} $\scnum(G)$ (see \cref{def:sc}). Setting $\scstar(G)\defeq\max\{1,\scnum(G)\}$, \hyperref[prf:main]{we prove} the following:
\begin{restatable}{theorem}{mainThm}\label{thm:main}
	Let $G$ be a graph. Then $\mathcal{J}_G$ is of K\"{o}nig type if and only if
	\begin{equation*}
		\pi(G)=\scstar(G)
	\end{equation*}
\end{restatable}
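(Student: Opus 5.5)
Throughout, $G$ has $m=\smallAbs{V(G)}$ vertices; isolated vertices of $G$ contribute nothing to $\mathcal{J}_G$. The starting point is \cite[Theorem 3.5(b)]{herzogGradedIdealsKonig2022}: $\mathcal{J}_G$ is of K\"{o}nig type if and only if $G$ contains a linear forest with $\hgt_R(\mathcal{J}_G)$ edges. The plan is to express both sides of that condition in terms of $\pi(G)$ and $\scnum(G)$.

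First, the maximum number of edges in a linear forest contained in $G$ is $m-\pi(G)$. A spanning linear forest with $k$ components (isolated vertices counted as trivial paths) has $m-k$ edges. Any linear forest can be extended to a spanning one by adding the missing vertices as trivial paths, without changing its edge count. So the number of components is minimised exactly when the edge count is maximised, and that minimum is $\pi(G)$.

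Second, the height. By the standard description of the minimal primes $P_S(G)$ of binomial edge ideals,
\begin{equation*}
	\hgt_R(\mathcal{J}_G)=\min_{S}\bigl(\smallAbs{S}+m-c(G\setminus S)\bigr),
\end{equation*}
where $c$ counts connected components and $S=\emptyset$ is allowed. Hence
\begin{equation*}
	\hgt_R(\mathcal{J}_G)=m-\max_S\bigl(c(G\setminus S)-\smallAbs{S}\bigr).
\end{equation*}
I would take \cref{def:sc} to be the usual scattering number, namely the maximum of $c(G\setminus S)-\smallAbs{S}$ over cut sets $S$, i.e.\ those with $c(G\setminus S)\geq 2$. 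The step I expect to be the main obstacle is checking that this maximum, taken over all $S$, equals $\scstar(G)=\max\{1,\scnum(G)\}$. The remaining cases, both using $c(G)\geq 1$ since graphs are non-empty, are as follows.
\begin{itemize}
	\item $S=\emptyset$ gives $c(G)$. If $G$ is disconnected, $\emptyset$ is itself a cut set, so this term is already counted in $\scnum(G)$. If $G$ is connected, it gives $1$.
	\item A nonempty $S$ with $c(G\setminus S)\leq 1$ gives at most $0$, which never exceeds the value $1$ from $S=\emptyset$.
	\item The extra values $S=V(G)$, or complete graphs with no cut sets, contribute at most $0$ and are covered by the $\max\{1,\cdot\}$.
\end{itemize}
I would verify these conventions carefully against \cref{def:sc}, in case it counts isolated vertices or treats the empty set differently. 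This gives $\hgt_R(\mathcal{J}_G)=m-\scstar(G)$.

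Third, combine. For every $S$, a linear forest restricted to $G\setminus S$ has at least $c(G\setminus S)$ paths, and deleting $S$ splits the original paths into at most $\smallAbs{S}$ additional pieces. Hence $\pi(G)\geq\scstar(G)$, matching the general inequality $\hgt_R(\mathcal{J}_G)\geq$ the maximal linear-forest edge count. Therefore $G$ contains a linear forest with $\hgt_R(\mathcal{J}_G)$ edges if and only if $m-\pi(G)=m-\scstar(G)$, that is, $\pi(G)=\scstar(G)$. By \cite[Theorem 3.5(b)]{herzogGradedIdealsKonig2022}, this is equivalent to $\mathcal{J}_G$ being of K\"{o}nig type.
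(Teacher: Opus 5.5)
Your proof is correct and follows the paper's route: $\LF(G)=\smallAbs{V(G)}-\pi(G)$, $\hgt_R(\mathcal{J}_G)=\smallAbs{V(G)}-\scstar(G)$, and the inequality $\scstar(G)\leq\pi(G)$ of \cref{prop:scPiIneq}. You genuinely need that inequality because you phrase the Herzog--Hibi--Moradi criterion as the existence of a linear forest with $\hgt_R(\mathcal{J}_G)$ edges, rather than as the equality $\hgt_R(\mathcal{J}_G)=\LF(G)$.

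The one step to justify explicitly is replacing the minimum over the minimal primes (i.e.\ over $S\in\mathcal{C}(G)$) by a minimum over all $S\subseteq V(G)$. This holds since $\mathcal{J}_G\subseteq P_S(G)$ and $\hgt_R(P_S(G))=\smallAbs{S}+\smallAbs{V(G)}-c(S)$ for every $S$. Alternatively, use \cref{prop:scSets}, which shows that removing from $S$ a vertex that reconnects no components strictly increases $c(S)-\smallAbs{S}$.
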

This characterisation allows us to apply existing graph-theoretic results to show that \hyperref[sec:KTClasses]{several classes} of graphs have binomial edge ideals of K\"{o}nig type.

In particular, \hyperref[prf:WCKT]{we answer} a question of LaClair and McCullough (\cite[Question 10.5]{laclairFpurityBinomialEdge2026}) concerning weakly closed graphs (see \cref{def:WC}) in the affirmative (and without their unmixedness hypothesis):
\begin{restatable}{corollary}{wcKTCor}\label{cor:WCKT}
	For any weakly closed graph $G$, $\mathcal{J}_G$ is of K\"{o}nig type.
\end{restatable}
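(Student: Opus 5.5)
The plan is to combine \cref{thm:main} with a known graph-theoretic result about cocomparability graphs. By \cref{thm:main}, it suffices to show that $\pi(G)=\scstar(G)$ for every weakly closed graph $G$. The first step is to invoke the known equivalence, due to Matsuda \cite{matsudaWeaklyClosedGraphs2018}, that a graph is weakly closed if and only if it is a cocomparability graph. This lets us work entirely with cocomparability graphs, about which a great deal is known regarding Hamiltonian paths and path covers.

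The second step is to establish the inequality $\pi(G)\geq\scstar(G)$ for every graph $G$. This is the easy direction, and presumably the paper proves it, or it can be read off from \cref{thm:main} together with \cite[Theorem 3.5]{herzogGradedIdealsKonig2022}. The argument is direct. Removing a cut set $S$ from a graph leaves $c(G-S)$ components, where $c$ denotes the number of components. A path cover with $p$ paths can only connect these components through the vertices of $S$. Each vertex of $S$ can merge at most two path pieces, so deleting the vertices of $S$ from the cover leaves at most $p+\smallAbs{S}$ paths. Since these paths must cover all $c(G-S)$ components, we get $p\geq c(G-S)-\smallAbs{S}$. Taking the maximum over $S$, together with the trivial bound $p\geq 1$, gives $\pi(G)\geq\scstar(G)$.

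The main step, and the main obstacle, is the reverse inequality $\pi(G)\leq\scstar(G)$ for cocomparability graphs. Here I would cite the theorem of Deogun and Kratsch (and related work of Damaschke) that the path cover number of a cocomparability graph equals $\max\{1,\scnum(G)\}$. This result is proved algorithmically, via lexicographic/LDFS orderings or via bump-number arguments on the complementary poset. The difficulty lies in matching the conventions: one must check that the paper's definition of $\scnum$ (\cref{def:sc}) agrees with the literature's, in particular on how disconnected graphs are handled and whether $S=\emptyset$ is allowed. The definition of $\scstar$ as $\max\{1,\scnum\}$ is designed precisely to absorb the connected traceable case, where $\scnum\leq 0$.

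If disconnected $G$ needs separate treatment, I would reduce to components. Both $\pi$ and (essentially) $\scnum$ are additive over components: take $S=\emptyset$ to get $c(G)$, and choose $S$ componentwise. A disjoint union of cocomparability graphs is again cocomparability, so the connected case suffices. Combining the two inequalities gives $\pi(G)=\scstar(G)$, and \cref{thm:main} then shows that $\mathcal{J}_G$ is of K\"{o}nig type.
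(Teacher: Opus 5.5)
Your proposal is correct and follows the paper's route exactly: the paper combines \cref{thm:main}, Matsuda's equivalence of weakly closed and cocomparability graphs, and Theorem 8 of Deogun, Kratsch and Steiner. That theorem is stated for all cocomparability graphs, so neither your easy inequality (which is the paper's \cref{prop:scPiIneq}, proved by the same path-splitting count) nor any reduction to components is actually needed. Had you needed that reduction, two details would need fixing: the relevant fact is that cocomparability is inherited by induced subgraphs, hence by components, rather than closure under disjoint unions; and it is $\scstar$, not $\scnum$, that is exactly additive over components.
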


This question stemmed from \cite[Corollary 10.4]{laclairFpurityBinomialEdge2026}: that if $\mathcal{J}_G$ is unmixed of K\"{o}nig type, then $G$ is weakly closed. Combined with \cref{cor:WCKT}, this allows us \hyperref[prf:unmixed]{to deduce}
\begin{restatable}{corollary}{unmixedCor}\label{cor:unmixed}
	Let $G$ be a graph, and suppose that $\mathcal{J}_G$ is unmixed. Then $\mathcal{J}_G$ is of K\"{o}nig type if and only if $G$ is weakly closed.
\end{restatable}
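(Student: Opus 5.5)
The statement to prove is \cref{cor:unmixed}. The plan is to prove the two implications separately, using \cref{cor:WCKT} for one and \cite[Corollary 10.4]{laclairFpurityBinomialEdge2026} for the other. Neither direction needs new combinatorics: the work is in checking that the hypotheses of the cited results match ours exactly.

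For the implication ``weakly closed $\Rightarrow$ K\"{o}nig type'', we use \cref{cor:WCKT}. It applies to every weakly closed graph with no unmixedness assumption. So if $G$ is weakly closed, $\mathcal{J}_G$ is of K\"{o}nig type, and here the unmixedness hypothesis of the corollary is simply not needed.

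For the converse, suppose $\mathcal{J}_G$ is unmixed and of K\"{o}nig type. This is precisely the hypothesis of \cite[Corollary 10.4]{laclairFpurityBinomialEdge2026}, which then gives that $G$ is weakly closed. The only point to verify is that the two papers use the same conventions. Specifically, LaClair and McCullough's notion of ``K\"{o}nig type'' should agree with that of \cite[Definition 1.1]{herzogGradedIdealsKonig2022} for binomial edge ideals, and their result should not assume a particular characteristic of $k$ or connectedness of $G$.

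The main obstacle is therefore this bookkeeping on conventions rather than any mathematics. If \cite[Corollary 10.4]{laclairFpurityBinomialEdge2026} turns out to be stated only for connected graphs, I would reduce to that case. Both properties localise to components. Unmixedness of $\mathcal{J}_G$ is equivalent to unmixedness of each $\mathcal{J}_{G_i}$, because $\mathcal{J}_G$ is a sum of ideals in disjoint sets of variables. K\"{o}nig type passes to components via the linear-forest characterisation \cite[Theorem 3.5(b)]{herzogGradedIdealsKonig2022}: height is additive over components, and a linear forest with $\hgt_R(\mathcal{J}_G)$ edges restricts to one with the maximal number of edges on each component. Weak closedness is also componentwise, since a cocomparability ordering can be built by concatenating orderings of the components. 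With these observations, applying the connected case to each component finishes the proof.
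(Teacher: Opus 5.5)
Your proposal is correct and is exactly the paper's proof: \cref{cor:WCKT} gives the implication from weakly closed to K\"{o}nig type without using unmixedness, and LaClair--McCullough's Corollary 10.4 (quoted in the paper as \cref{thm:LM}) gives the converse. The paper quotes that result for arbitrary graphs, so your componentwise fallback is unnecessary. It is nonetheless sound: the restriction step implicitly uses $\LF(G_i)\leq\hgt_R(\mathcal{J}_{G_i})$ on each component, which follows from \cref{prop:scPiIneq} via \cref{prop:scHeight} and \cref{prop:LFPi}.
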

This provides a useful alternative to Herzog, Hibi and Moradi's characterisation of unmixed binomial edge ideals of K\"{o}nig type via traceability (\cite[Proposition 3.6]{herzogGradedIdealsKonig2022}), since deciding whether a graph admits a Hamiltonian path is NP-complete in general (see, for example, \cite[p.~60]{gareyComputersIntractabilityGuide1979}), whereas weakly closed graphs are equivalent to cocomparability graphs (see \cref{def:cocomp}) via, for example, \cite[Theorem 2.9]{matsudaWeaklyClosedGraphs2018}, and these can be recognised in polynomial time (see \cite[Theorem 5.33]{golumbicAlgorithmicGraphTheory2004}\footnote{This theorem proves that \textit{comparability} graphs can be recognised in polynomial time, but these are, by definition, the complements of cocomparability graphs, and so their recognition problems are equivalent.}).

We conclude with a conjecture regarding AT-free graphs (see \cref{def:ATFree}):
\begin{restatable}{conjecture}{ATFreeConj}\label{conj:ATFreeKT}
	For any AT-free graph $G$, $\mathcal{J}_G$ is of K\"{o}nig type.
\end{restatable}
This conjecture is supported by \hyperref[MacaulayEvidence]{computational evidence} from \citeMacaulay{}, and the fact that weakly closed graphs form a subclass of AT-free graphs (see, for example, \cite[Theorem 2.9]{matsudaWeaklyClosedGraphs2018} and \cite{brandstadtGraphClassesSurvey1999}).

\subsection*{Outline of Paper}

\begin{itemize}
	\item \cref{sec:BEIs} consists of the definition and key properties of binomial edge ideals.
	\item \cref{sec:KT} includes the \hyperref[def:KT]{definition} of ideals of K\"{o}nig type, a \hyperref[thm:heightLFKT]{characterisation} in the case of binomial edge ideals, some \hyperref[subsec:KTExamples]{examples} and \hyperref[subsec:KTNonExamples]{non-examples}, and a brief \hyperref[subsec:UMKTOverview]{overview} of current results on unmixed binomial edge ideals of K\"{o}nig type.
	\item \cref{sec:mainResult} introduces the necessary \hyperref[subsec:PCSN]{graph-theoretic concepts} and contains \hyperref[prf:main]{the proof} of \cref{thm:main}.
	\item \cref{sec:KTClasses} applies \cref{thm:main} to prove \cref{cor:WCKT}, and also that several \hyperref[OtherKTClasses]{other classes} of graphs have binomial edge ideals of K\"{o}nig type via this same characterisation.
	\item \cref{sec:umKT} proves several results relating to unmixed binomial edge ideals of K\"{o}nig type, including \cref{cor:unmixed}, \hyperref[prop:almostNoUMKT]{and} that almost no graphs have unmixed binomial edge ideals of K\"{o}nig type.
	\item \cref{sec:ATFreeConj} concerns \cref{conj:ATFreeKT}, with \hyperref[MacaulayEvidence]{evidence} in its favour, and other related \hyperref[ATFreeDiscussion]{discussions}.
\end{itemize}

\newpage

\section{Binomial Edge Ideals}\label{sec:BEIs}

Binomial edge ideals were introduced in \cite{herzogBinomialEdgeIdeals2010}, and independently in \cite{ohtaniGraphsIdealsGenerated2011}. We begin with their definition (parts of this subsection appeared previously in \cite[Section 1]{williamsLFCoversBinomialEdge2023}).
\begin{definition}\label{def:BEI}
	Let $G$ be a graph, and set $f_{i,j}\defeq x_iy_j-x_jy_i$ for $1\leq i<j\leq n$.

	We define the \emph{binomial edge ideal} of $G$ as
	\begin{equation*}
		\mathcal{J}_G\defeq(f_{i,j}:\{i,j\}\in E(G))
	\end{equation*}
\end{definition}

A key fact about binomial edge ideals is the following:
\begin{theorem}\thmCite[Corollary 2.2]{herzogBinomialEdgeIdeals2010}\label{thm:BEIRadical}
	Binomial edge ideals are radical.
\end{theorem}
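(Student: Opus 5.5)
The plan is to reduce radicality to a statement about an initial ideal, using the standard fact that if some monomial order $<$ has $\inord_<(I)$ squarefree (hence radical), then $I$ is radical. To see why this holds, suppose $f^m\in I$ with $f\notin I$. Replace $f$ by its normal form with respect to a Gröbner basis of $I$, so that no term of $f$ lies in $\inord_<(I)$. Then $\inord_<(f)^m=\inord_<(f^m)\in\inord_<(I)$. Since $\inord_<(I)$ is a squarefree monomial ideal, it is radical, so $\inord_<(f)\in\inord_<(I)$, which is a contradiction. It therefore suffices to exhibit a monomial order for which $\inord_<(\mathcal{J}_G)$ is generated by squarefree monomials.

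I would use the lexicographic order induced by $x_1>\cdots>x_n>y_1>\cdots>y_n$. For $i<j$ this gives $\inord_<(f_{i,j})=x_iy_j$. The generators $f_{i,j}$ alone are generally not a Gröbner basis, so I would enlarge them using paths. Call a path $i=i_0,i_1,\ldots,i_r=j$ in $G$ with $i<j$ \emph{admissible} if it has the following three properties:
\begin{itemize}
\item its vertices are distinct;
\item each interior vertex $i_k$ satisfies $i_k<i$ or $i_k>j$;
\item no proper subset of its vertices also forms a path from $i$ to $j$.
\end{itemize}
To such a path $\pi$ attach the monomial
\begin{equation*}
	u_\pi=\prod_{i_k>j}x_{i_k}\prod_{i_k<i}y_{i_k},
\end{equation*}
and form the binomial $u_\pi f_{i,j}$. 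An inductive argument along the path, multiplying successive edge binomials and using the identity $x_kf_{i,j}=x_if_{k,j}+x_jf_{i,k}$ (up to sign, with the analogous identity in the $y$'s), shows that $u_\pi f_{i,j}\in\mathcal{J}_G$. Its leading monomial is $u_\pi x_iy_j$, which is squarefree: the vertices are distinct, and each interior vertex contributes exactly one of $x_{i_k}$ or $y_{i_k}$, with $x_i$ and $y_j$ distinct from these.

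The main step, and the main obstacle, is to prove that $\{u_\pi f_{i,j}\}$ is a Gröbner basis, via Buchberger's criterion. I would check that every $S$-polynomial of two such elements reduces to $0$. When the leading monomials are coprime, this is automatic. Otherwise the two paths share an endpoint, or an interior vertex appears in one path's monomial and as an endpoint of the other. In each of these configurations I would concatenate the two paths and extract an admissible subpath. I would then show that the $S$-polynomial is a combination of elements $u_{\pi'}f_{k,l}$ coming from these subpaths, each with leading term at most that of the $S$-polynomial, so that it has a standard expression. This bookkeeping over cases is where the real work lies. Once it is done, $\inord_<(\mathcal{J}_G)=(u_\pi x_iy_j)$ is squarefree, and the first paragraph gives that $\mathcal{J}_G$ is radical.
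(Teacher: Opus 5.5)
Your plan is the same argument as the source the paper cites without proof, \cite[Theorem 2.1 and Corollary 2.2]{herzogBinomialEdgeIdeals2010}: there the binomials $u_\pi f_{i,j}$ attached to admissible paths are shown, via Buchberger's criterion, to form a Gr\"{o}bner basis for this lex order, and radicality then follows from the squarefree initial ideal exactly as in your first paragraph. One small correction for the S-polynomial step: for $u_\pi f_{i,j}$ and $u_\sigma f_{k,l}$ the leading monomials can also overlap only through $w=\gcd(u_\pi,u_\sigma)$, i.e.\ a shared interior vertex contributing the same variable to both, a case your list omits; it is harmless, because writing $u_\pi=wa$ and $u_\sigma=wb$ gives the standard representation $S=b\,x_ly_k\,u_\pi f_{i,j}-a\,x_jy_i\,u_\sigma f_{k,l}$.
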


In fact, an explicit description of the primary decomposition can be given by purely combinatorial means.

\begin{notation}
	Let $G$ be a graph, and $S\subseteq V(G)$. Denote by $c_G(S)$ the number of connected components of $G\setminus S$ (we will just write $c(S)$ when there is no confusion). Then we set
	\begin{equation*}
		\mathcal{C}(G)\defeq\{S\subseteq V(G):\text{\normalfont$S=\varnothing$ or $c(S\setminus\{v\})<c(S)$ for all $v\in S$}\}
	\end{equation*}
	That is, when we ``add back'' any vertex in $S\in\mathcal{C}(G)$, it must reconnect some components of $G\setminus S$.
\end{notation}

\begin{notation}
	Let $G$ be a graph, and $S\subseteq V(G)$. Denote by $G_1,\ldots,G_{c(S)}$ the connected components of $G\setminus S$, and let $\tilde{G}_i$ be the complete graph with vertex set $V(G_i)$. Then we set
	\begin{equation*}
		P_S(G)\defeq(x_i,y_i:i\in S)+\mathcal{J}_{\tilde{G}_1}+\cdots+\mathcal{J}_{\tilde{G}_{c(S)}}
	\end{equation*}
\end{notation}

\begin{proposition}
	$\mathcal{J}_{K_S}$ is prime for any $S\subseteq\{1,\ldots,n\}$, and therefore $P_T(G)$ is prime for any graph $G$ and $T\subseteq V(G)$ also.
\end{proposition}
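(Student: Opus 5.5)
We prove first that $\mathcal{J}_{K_S}$ is prime, and then deduce that $P_T(G)$ is prime.

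\textbf{Primality of $\mathcal{J}_{K_S}$.} This ideal is generated by the $2\times 2$ minors of the generic $2\times\smallAbs{S}$ matrix
\begin{equation*}
	X_S=\begin{pmatrix} x_{i_1} & \cdots & x_{i_m}\\ y_{i_1} & \cdots & y_{i_m}\end{pmatrix}, \qquad S=\{i_1<\cdots<i_m\}.
\end{equation*}
Such a determinantal ideal is prime, and the quickest self-contained route is to identify it with the kernel of the $k$-algebra map
\begin{equation*}
	\varphi\colon k[x_i,y_i:i\in S]\to k[s,t_i:i\in S],\qquad x_i\mapsto t_i,\quad y_i\mapsto st_i .
\end{equation*}
The image is a domain, so $\ker\varphi$ is prime. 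Each $f_{i,j}$ maps to $t_ist_j-t_jst_i=0$, so $\mathcal{J}_{K_S}\subseteq\ker\varphi$.

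The main step is the reverse inclusion. By \cref{thm:BEIRadical} (or a direct Gröbner argument) the minors form a Gröbner basis with respect to lex order with $x_1>\cdots>x_n>y_1>\cdots>y_n$. The leading terms are then $x_iy_j$ for $i<j$. Hence the standard monomials modulo $\mathcal{J}_{K_S}$ are the products $x_{a_1}\cdots x_{a_p}y_{b_1}\cdots y_{b_q}$ with every $a\geq$ every $b$, that is, $\max b\leq\min a$.

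Under $\varphi$ such a monomial maps to $s^q\prod t_{a}\prod t_{b}$. From the image we recover $q$ and the multiset $A\cup B$ of indices. The ordering constraint then forces $B$ to be the $q$ smallest elements of that multiset. So $\varphi$ is injective on standard monomials, which gives $\ker\varphi=\mathcal{J}_{K_S}$.

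The Gröbner basis claim for all $2$-minors, i.e.\ that all $S$-pairs reduce to zero, is the one step needing real checking. It is classical: it is Sturmfels' result for determinantal ideals, or \cite{herzogBinomialEdgeIdeals2010} for closed graphs, which covers complete graphs. I would cite it rather than redo it.

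\textbf{Deduction for $P_T(G)$.} The sets $T$ and $V(G_1),\ldots,V(G_{c(T)})$ are pairwise disjoint, so the summands of $P_T(G)$ involve disjoint sets of variables. The quotient is therefore
\begin{equation*}
	R/P_T(G)\cong \bigotimes_k\Bigl(k[x_i,y_i:i\in V(G_\ell)]/\mathcal{J}_{\tilde G_\ell}\Bigr)\otimes_k k[x_i,y_i:i\notin V(G)],
\end{equation*}
where the variables indexed by $T$ are killed. Each factor is a domain by the first part. A tensor product over $k$ of domains need not be a domain if $k$ is not algebraically closed, so this needs care.

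Each factor is isomorphic, via $\varphi$, to a monomial subring $k[t_i,st_i]$ of a polynomial ring, using separate variables $s_\ell$ and $t_i$ for each component. The tensor product then embeds into a single polynomial ring over $k$, namely the subring generated by all of these monomials. It is therefore a domain, and $P_T(G)$ is prime.
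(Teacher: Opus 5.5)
Your proof is correct, but it takes a different route from the paper. The paper's proof is just a citation of the classical primality of generic determinantal ideals \cite[Theorem 2.10]{brunsDeterminantalRings1988}, and it passes to $P_T(G)$ with an unexplained ``therefore''. You instead identify $\mathcal{J}_{K_S}$ with the toric ideal $\ker\varphi$ by a standard-monomial injectivity argument. You then prove that $P_T(G)$ is prime by embedding $R/P_T(G)$ into a single polynomial ring.

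The paper's route is shorter and applies to minors of any size. Yours is elementary and gives an explicit description of $R/P_T(G)$ as a monomial subring. It also deals openly with a real subtlety: a tensor product of domains over a non-algebraically-closed field need not be a domain. The paper's ``therefore'' implicitly needs some argument of this kind, for instance that determinantal primality holds over any domain as base ring, so that the components can be adjoined one at a time.

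Three small points.

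First, citing \cref{thm:BEIRadical} for the Gr\"{o}bner basis claim is backwards. Radicality does not imply that the $f_{i,j}$ form a Gr\"{o}bner basis; in \cite{herzogBinomialEdgeIdeals2010} radicality is deduced from the Gr\"{o}bner basis, not the other way round. Your fallback citation of the closed-graph result is the correct one.

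Second, you do not actually need that input. Divide any $f\in\ker\varphi$ by the $f_{i,j}$. The remainder $r$ satisfies $f-r\in\mathcal{J}_{K_S}\subseteq\ker\varphi$, and none of its terms is divisible by any $x_iy_j$ with $i<j$. So $r$ is supported on exactly the monomials on which you showed $\varphi$ is injective, and $\varphi(r)=0$ forces $r=0$. This makes the first part fully self-contained, and it gives the Gr\"{o}bner basis statement as a by-product.

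Third, in the embedding step you should say why the tensor product of the inclusions is injective. Either note that tensoring over a field preserves injections, or note that products of monomials in disjoint sets of variables are distinct monomials.
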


\begin{proof}
	This follows from \cite[Theorem 2.10]{brunsDeterminantalRings1988}.
\end{proof}

\begin{theorem}\thmCite[Corollary 3.9]{herzogBinomialEdgeIdeals2010}\label{thm:BEIPD}
	Let $G$ be a graph. Then
	\begin{equation*}
		\mathcal{J}_G=\myCap_{S\in\mathcal{C}(G)}P_S(G)
	\end{equation*}
	is the primary decomposition of $\mathcal{J}_G$.
\end{theorem}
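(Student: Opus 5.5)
The plan is to use \cref{thm:BEIRadical}: since $\mathcal{J}_G$ is radical, it is the intersection of its minimal primes. It therefore suffices to prove two things. First, every prime containing $\mathcal{J}_G$ contains some $P_S(G)$, and each $P_S(G)$ contains $\mathcal{J}_G$; together these give $\mathcal{J}_G=\bigcap_{S\subseteq V(G)}P_S(G)$. Second, the minimal members of $\{P_S(G):S\subseteq V(G)\}$ are exactly the $P_S(G)$ with $S\in\mathcal{C}(G)$, and these are pairwise distinct. The intersection over all $S$ then collapses to an irredundant intersection of primes, which is the primary decomposition.

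\textbf{Step 1: $\mathcal{J}_G\subseteq P_S(G)$.} Take an edge $\{i,j\}$. If it meets $S$, then $f_{i,j}\in(x_l,y_l:l\in S)$. Otherwise $i$ and $j$ lie in the same component $G_l$ of $G\setminus S$, so $f_{i,j}\in\mathcal{J}_{\tilde G_l}$.

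\textbf{Step 2: every prime $P\supseteq\mathcal{J}_G$ contains some $P_S(G)$.} Set $S=\{i: x_i,y_i\in P\}$, so $(x_i,y_i:i\in S)\subseteq P$. The key tool is the pair of identities
\begin{equation*}
x_jf_{i,k}=x_if_{j,k}+x_kf_{i,j},\qquad y_jf_{i,k}=y_if_{j,k}+y_kf_{i,j},
\end{equation*}
with signs adjusted according to the ordering of the indices. Suppose $f_{i,j},f_{j,k}\in P$ and $j\notin S$. Then $x_j\notin P$ or $y_j\notin P$, and primality gives $f_{i,k}\in P$. Now take any two vertices $a,b$ in the same component of $G\setminus S$. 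Induct along a path between them in $G\setminus S$; every internal vertex avoids $S$, so repeated use of the identity shows $f_{a,b}\in P$. Hence $\mathcal{J}_{\tilde G_l}\subseteq P$ for every component $G_l$, and so $P_S(G)\subseteq P$.

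\textbf{Step 3: identifying the minimal primes.} This is the main obstacle.

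\emph{Non-members of $\mathcal{C}(G)$ are not minimal.} Suppose $S\notin\mathcal{C}(G)$. Then there is $v\in S$ with $c(S\setminus\{v\})\ge c(S)$, meaning $v$ is adjacent to at most one component of $G\setminus S$. The components of $G\setminus(S\setminus\{v\})$ are then those of $G\setminus S$, with at most one of them enlarged by $v$. The complete-graph binomials involving $v$ lie in $(x_v,y_v)\subseteq P_S(G)$. Hence $P_{S\setminus\{v\}}(G)\subsetneq P_S(G)$, so $P_S(G)$ is not minimal.

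\emph{Members of $\mathcal{C}(G)$ are minimal.} Let $S\in\mathcal{C}(G)$ and suppose $P_T(G)\subseteq P_S(G)$. Modulo $(x_l,y_l:l\in S)$, the ideal $P_S(G)$ is a sum of determinantal ideals in disjoint sets of variables, so it contains no variable outside $S$. Therefore $T\subseteq S$, and each component of $G\setminus S$ lies inside a component of $G\setminus T$.

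Now suppose some $v\in S\setminus T$ exists. Since $S\in\mathcal{C}(G)$, $v$ is adjacent to two distinct components of $G\setminus S$; pick $a$ and $b$ in them. Then $a$ and $b$ lie in one component of $G\setminus T$, so $f_{a,b}\in P_T(G)$. However, $f_{a,b}\notin P_S(G)$: pass to the quotient by the variables of $S$, where $f_{a,b}$ is a binomial mixing variables from two different determinantal blocks, and use a multigrading by component to see it is not in that ideal. This contradiction gives $T=S$.

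\emph{Distinctness.} The same variable argument shows that $S$ can be recovered from $P_S(G)$, so distinct $S$ give distinct primes. This yields the irredundant decomposition stated in the theorem.
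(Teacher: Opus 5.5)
Your proof is correct and is essentially the standard argument of \cite{herzogBinomialEdgeIdeals2010}, which the paper cites rather than reproves. It uses radicality, then the identity $x_jf_{i,k}=x_if_{j,k}+x_kf_{i,j}$ (and its $y$-analogue) to show every prime over $\mathcal{J}_G$ contains some $P_S(G)$, then the cut-vertex comparison of $P_T(G)\subseteq P_S(G)$ to single out the minimal ones. The unstated details are routine: strictness of $P_{S\setminus\{v\}}(G)\subsetneq P_S(G)$ holds because $x_v$ is not in the degree-one part of $P_{S\setminus\{v\}}(G)$, and if $v$ has no neighbour outside $S$ it becomes a singleton component contributing the zero ideal.
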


This allows us to obtain a combinatorial formula for $\hgt_R(\mathcal{J}_G)$.

\begin{lemma}\thmCite[Lemma 3.1]{herzogBinomialEdgeIdeals2010}\label{lem:height}
	Let $G$ be a graph, and $S\in\mathcal{C}(G)$. Then
	\begin{equation*}
		\hgt_R(P_S(G))=\smallAbs{S}+\smallAbs{V(G)}-c(S)
	\end{equation*}
\end{lemma}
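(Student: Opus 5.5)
We must prove $\hgt_R(P_S(G))=\smallAbs{S}+\smallAbs{V(G)}-c(S)$. The plan is to split $P_S(G)$ into pieces in pairwise disjoint sets of variables and add their heights. The vertex sets $S$, $V(G_1),\ldots,V(G_{c(S)})$ partition $V(G)$. The generators $(x_i,y_i:i\in S)$ involve only the variables indexed by $S$, and each $\mathcal{J}_{\tilde{G}_j}$ involves only the variables indexed by $V(G_j)$. For prime ideals $\mathfrak{p}_1,\ldots,\mathfrak{p}_r$ generated in disjoint sets of variables, the sum $\mathfrak{p}_1+\cdots+\mathfrak{p}_r$ satisfies
\begin{equation*}
	R/(\mathfrak{p}_1+\cdots+\mathfrak{p}_r)\cong A_1/\mathfrak{p}_1'\otimes_k\cdots\otimes_k A_r/\mathfrak{p}_r'\otimes_k B,
\end{equation*}
where the $A_i$ are the corresponding variable subrings, $\mathfrak{p}_i'=\mathfrak{p}_i\cap A_i$, and $B$ is the polynomial ring in the unused variables. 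Since Krull dimension is additive over tensor products of finitely generated $k$-algebras, heights add. This gives
\begin{equation*}
	\hgt_R P_S(G)=2\smallAbs{S}+\sum_{j}\hgt \mathcal{J}_{\tilde{G}_j}.
\end{equation*}

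It remains to compute $\hgt\mathcal{J}_{K_m}=m-1$ for a complete graph on $m$ vertices. The ideal $\mathcal{J}_{K_m}$ is the ideal of $2\times 2$ minors of the generic $2\times m$ matrix with rows $x$ and $y$. By the standard determinantal height formula (Bruns–Vetter, \cite[Theorem 2.10]{brunsDeterminantalRings1988}, already cited for primality), the ideal of $t$-minors of a generic $p\times q$ matrix has height $(p-t+1)(q-t+1)$. Here $p=2$, $q=m$, $t=2$, which gives $m-1$. The same value follows directly: the quotient $k[x,y]/\mathcal{J}_{K_m}$ is the coordinate ring of rank-$\le 1$ matrices, parametrised by $(a,b)\otimes(t_1,\ldots,t_m)$, which has dimension $m+1$, so the height is $2m-(m+1)=m-1$. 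This also covers $m=1$, where the ideal is $0$ with height $0$.

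Summing gives
\begin{equation*}
	\hgt P_S(G)=2\smallAbs{S}+\sum_j\bigl(\smallAbs{V(G_j)}-1\bigr)=2\smallAbs{S}+\bigl(\smallAbs{V(G)}-\smallAbs{S}\bigr)-c(S)=\smallAbs{S}+\smallAbs{V(G)}-c(S).
\end{equation*}
The only step needing care is the additivity of height over sums of primes in disjoint variables. I would justify it through the tensor product dimension formula $\dim(A\otimes_k B)=\dim A+\dim B$ for finitely generated $k$-algebras, together with $\hgt=\dim R-\dim R/\mathfrak{p}$, which holds because $R$ is a polynomial ring over a field. The hypothesis $S\in\mathcal{C}(G)$ is not actually used; the formula holds for every $S$.
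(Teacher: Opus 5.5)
Your argument is correct and is essentially the proof of \cite[Lemma 3.1]{herzogBinomialEdgeIdeals2010}, which the paper cites rather than reproduces: you write $R/P_S(G)$ as a tensor product over $k$ of quotients in disjoint sets of variables, use additivity of dimension, and insert $\hgt(\mathcal{J}_{K_m})=m-1$ from the determinantal height formula. Your handling of the unused variables (matching the paper's note on $n$ versus $\smallAbs{V(G)}$) is right, and so is your remark that the hypothesis $S\in\mathcal{C}(G)$ is not needed.
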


\begin{note}
	\cref{lem:height} is stated in \cite{herzogBinomialEdgeIdeals2010} with $n$ in place of $\smallAbs{V(G)}$, since they assume that $G$ has exactly $n$ vertices. However we only require that $G$ has \textit{at most} $n$ vertices, and since the additional indeterminates do not affect the height, our statements agree.
\end{note}

\begin{corollary}\label{cor:height}
	Let $G$ be a graph. Then
	\begin{equation*}
		\hgt_R(\mathcal{J}_G)=\min\{\smallAbs{S}+\smallAbs{V(G)}-c(S):S\in\mathcal{C}(G)\}
	\end{equation*}
\end{corollary}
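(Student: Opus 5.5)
The statement follows directly from combining \cref{thm:BEIPD} with \cref{lem:height}. Since $\mathcal{J}_G$ is radical (\cref{thm:BEIRadical}), \cref{thm:BEIPD} gives its minimal primary decomposition as the intersection of the primes $P_S(G)$ for $S\in\mathcal{C}(G)$. Every minimal prime of $\mathcal{J}_G$ therefore occurs among these $P_S(G)$. Conversely, each $P_S(G)$ with $S\in\mathcal{C}(G)$ contains $\mathcal{J}_G$.

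Recall that the height of an ideal in a Noetherian ring is the minimum of the heights of the primes containing it. Every prime containing $\mathcal{J}_G$ contains some minimal prime of $\mathcal{J}_G$, and a prime has height at least that of any prime it contains. Hence this minimum may be taken over the minimal primes alone. We obtain
\begin{equation*}
	\hgt_R(\mathcal{J}_G)=\min\{\hgt_R(P_S(G)):S\in\mathcal{C}(G)\}.
\end{equation*}
The possibly redundant components in the indexing set cause no harm. Each $P_S(G)$ with $S\in\mathcal{C}(G)$ contains $\mathcal{J}_G$, so it is no smaller in height than some minimal prime, and the minimum is unchanged.

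Substituting the formula $\hgt_R(P_S(G))=\smallAbs{S}+\smallAbs{V(G)}-c(S)$ from \cref{lem:height} into this expression gives exactly the claimed formula.

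There is no real obstacle. The only point needing a moment's care is justifying that the minimum over the whole index set $\mathcal{C}(G)$ equals the minimum over the minimal primes, and the containment argument above settles this. Note also that $\mathcal{C}(G)$ is nonempty, since it contains $\varnothing$, so the minimum is well defined.
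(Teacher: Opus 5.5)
Your proposal is correct and takes the same approach as the paper, which simply says the result is immediate from \cref{thm:BEIPD} and \cref{lem:height}. You have spelled out the step the paper leaves implicit: the height of $\mathcal{J}_G$ is the minimum of $\hgt_R(P_S(G))$ over $S\in\mathcal{C}(G)$, because these primes include all the minimal primes of $\mathcal{J}_G$.
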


\begin{proof}
	This is immediate from \cref{thm:BEIPD} and \cref{lem:height}.
\end{proof}

\begin{corollary}\label{cor:addHeight}
	Let $G_1$ and $G_2$ be graphs with disjoint vertex sets. Then
	\begin{equation*}
		\hgt_R(\mathcal{J}_{G_1\sqcup G_2})=\hgt_R(\mathcal{J}_{G_1})+\hgt_R(\mathcal{J}_{G_2})
	\end{equation*}
	and $\mathcal{J}_{G_1\sqcup G_2}$ is unmixed if and only if both $\mathcal{J}_{G_1}$ and $\mathcal{J}_{G_2}$ are unmixed. In particular, unmixedness can be checked componentwise on the connected components of $G$.
\end{corollary}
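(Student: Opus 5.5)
The plan is to derive both parts of \cref{cor:addHeight} from the primary decomposition in \cref{thm:BEIPD} and the height formula in \cref{lem:height}. The key is to understand how $\mathcal{C}(G_1\sqcup G_2)$ relates to $\mathcal{C}(G_1)$ and $\mathcal{C}(G_2)$. Write $G=G_1\sqcup G_2$, and for $S\subseteq V(G)$ set $S_i\defeq S\cap V(G_i)$. Since there are no edges between $V(G_1)$ and $V(G_2)$, the components of $G\setminus S$ are exactly the components of $G_1\setminus S_1$ together with those of $G_2\setminus S_2$. Hence
\begin{equation*}
	c_G(S)=c_{G_1}(S_1)+c_{G_2}(S_2).
\end{equation*}
The same holds after removing a single vertex $v$ from $S$, since only the summand for the graph containing $v$ changes.

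The first step is to show that $S\in\mathcal{C}(G)$ if and only if $S_1\in\mathcal{C}(G_1)$ and $S_2\in\mathcal{C}(G_2)$. Take $v\in S_1$. By the displayed additivity, $c_G(S\setminus\{v\})<c_G(S)$ holds exactly when $c_{G_1}(S_1\setminus\{v\})<c_{G_1}(S_1)$. This gives the cut-point condition on each side. The convention that $\varnothing\in\mathcal{C}$ is compatible: if $S_1=\varnothing$, there is no condition on the $G_1$ side. So $S\mapsto(S_1,S_2)$ is a bijection $\mathcal{C}(G)\to\mathcal{C}(G_1)\times\mathcal{C}(G_2)$.

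The second step is to compute heights. Using $\smallAbs{V(G)}=\smallAbs{V(G_1)}+\smallAbs{V(G_2)}$, \cref{lem:height} gives
\begin{equation*}
	\hgt_R(P_S(G))=\hgt_R(P_{S_1}(G_1))+\hgt_R(P_{S_2}(G_2)).
\end{equation*}
Minimising over the product, via \cref{cor:height}, yields the additivity of heights.

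For unmixedness, I use that $\mathcal{J}_G$ is radical (\cref{thm:BEIRadical}), with minimal primes exactly the $P_S(G)$ for $S\in\mathcal{C}(G)$. Unmixedness therefore means all $P_S(G)$, $S\in\mathcal{C}(G)$, have the same height. Suppose the heights on $\mathcal{C}(G_1)$ and on $\mathcal{C}(G_2)$ are each constant. Then by the sum formula they are constant on $\mathcal{C}(G)$. Conversely, suppose $S_1,S_1'\in\mathcal{C}(G_1)$ have different heights. Fix any $S_2\in\mathcal{C}(G_2)$, for instance $\varnothing$. Then $S_1\cup S_2$ and $S_1'\cup S_2$ lie in $\mathcal{C}(G)$ and have different heights. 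The final claim about connected components follows by induction on the number of components.

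The only real care point is the first step. One must check that removing a vertex of $S_1$ does not affect the components on the $G_2$ side, and that the edge cases with $S_1$ or $S_2$ empty are consistent with the definition of $\mathcal{C}$. I also assume that \cref{thm:BEIPD} is irredundant, i.e.\ the $P_S(G)$ with $S\in\mathcal{C}(G)$ are precisely the minimal primes. This is how it is stated ("the primary decomposition"), and I take it as given.
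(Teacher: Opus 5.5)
Your proof is correct and takes the paper's route: the paper says the height claim is immediate from \cref{cor:height}, and that unmixedness follows from this together with \cref{lem:height}. Your bijection $\mathcal{C}(G_1\sqcup G_2)\to\mathcal{C}(G_1)\times\mathcal{C}(G_2)$, obtained from the additivity $c_G(S)=c_{G_1}(S_1)+c_{G_2}(S_2)$, is exactly the detail the paper leaves implicit.
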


\begin{proof}
	The first claim is immediate from \cref{cor:height}, and the second from this and \cref{lem:height}.
\end{proof}

\pagebreak

\section{An Overview of Binomial Edge Ideals of K\"{o}nig Type}\label{sec:KT}

We recall the definition of ideals of K\"{o}nig type introduced by Herzog, Hibi, and Moradi in \cite{herzogGradedIdealsKonig2022}.

\begin{definition}\thmCite[Definition 1.1]{herzogGradedIdealsKonig2022}\label{def:KT}
	Let $k$ be a field, $R=k[x_1,\ldots,x_n]$ for some $n\geq1$, and $I$ a graded ideal of $R$ of height $h$. We say that $I$ is of \emph{K\"{o}nig type} if there exists a sequence of homogeneous polynomials $\bm{\mathrm{f}}=f_1,\ldots,f_h$ such that:
	\begin{enumerate}
		\item The $f_i$ form part of a minimal system of generators of $I$.
		\item There exists a monomial order $<$ on $R$ such that $\inord_<(f_1),\ldots,\inord_<(f_h)$ is a regular sequence.
	\end{enumerate}
	More precisely, $I$ is of K\"{o}nig type \emph{with respect to $\bm{\mathrm{f}}$ and $<$}.
\end{definition}

\subsection{Characterisation \& First Properties}

There is an elegant characterisation of binomial edge ideals of K\"{o}nig type again due to Herzog, Hibi and Moradi (\cite[Theorem 3.5(b)]{herzogGradedIdealsKonig2022}). We first introduce some terminology and notation.

\begin{definition}
	We say that a graph is a \emph{linear forest} (or \emph{semi-path}) if every connected component is a path. For a graph $G$, we say that a linear forest in $G$ is \emph{maximum} if no other linear forest in $G$ has a greater number of edges.
\end{definition}

\begin{notation}
	For a graph $G$, we denote by $\LF(G)$ the number of edges of a maximum linear forest in $G$.
\end{notation}

\begin{theorem}\thmCite[Theorem 3.5(b)]{herzogGradedIdealsKonig2022}\label{thm:heightLFKT}
	Let $G$ be a graph. Then $\mathcal{J}_G$ is of K\"{o}nig type if and only if
	\begin{equation*}
		\hgt_R(\mathcal{J}_G)=\LF(G)
	\end{equation*}
\end{theorem}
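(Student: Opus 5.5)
We prove both directions through a single explicit construction. First we show $\hgt_R(\mathcal{J}_G)\geq\LF(G)$ always holds. Then we show that equality gives the required sequence, and that a sequence as in \cref{def:KT} forces equality.

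\textbf{Step 1: the construction.} Let $L$ be a linear forest in $G$ with edge set $E(L)$. Since the paths of $L$ are vertex-disjoint, we may relabel the vertices of $G$ so that every path of $L$ runs through consecutive labels $a,a+1,\ldots,b$ in increasing order. Take the lexicographic order with $x_1>\cdots>x_n>y_1>\cdots>y_n$. For each edge $\{i,i+1\}\in E(L)$ we then have $\inord_<(f_{i,i+1})=x_iy_{i+1}$. Distinct edges of $L$ have distinct $x$-indices and distinct $y$-indices, so these initial monomials are pairwise coprime and hence form a regular sequence.

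Because $\inord_<(f_e)$ for $e\in E(L)$ is a regular sequence, the $f_e$ themselves form a regular sequence. So $\mathcal{J}_L$ is a complete intersection of height $\smallAbs{E(L)}$. From $\mathcal{J}_L\subseteq\mathcal{J}_G$ we get $\hgt_R(\mathcal{J}_G)\geq\LF(G)$.

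\textbf{Step 2: the direction ($\Leftarrow$).} Suppose $\hgt_R(\mathcal{J}_G)=\LF(G)$, and take $L$ to be a maximum linear forest. The $f_e$ for $e\in E(L)$ are part of a minimal generating system of $\mathcal{J}_G$. Indeed, $\mathcal{J}_G$ is generated in degree $2$, and the binomials $f_{i,j}$ for $\{i,j\}\in E(G)$ are linearly independent, since their supports are disjoint. Together with Step 1, this shows $\mathcal{J}_G$ is of K\"{o}nig type.

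\textbf{Step 3: the direction ($\Rightarrow$), and the main obstacle.} Suppose $\mathcal{J}_G$ is of K\"{o}nig type with respect to $f_1,\ldots,f_h$ and $<$, where $h=\hgt_R(\mathcal{J}_G)$. Every minimal generator has degree $2$. The degree-$2$ part of $\mathcal{J}_G$ is spanned by the $f_{i,j}$, and distinct ordered pairs give distinct monomials $x_iy_j$, so no cancellation can occur. Hence each $\inord_<(f_k)$ is a monomial $x_ay_b$ with $\{a,b\}\in E(G)$.

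A regular sequence of degree-$2$ monomials must be pairwise coprime. Thus the arcs $a\to b$ form a digraph with every in-degree and out-degree at most $1$. Such a digraph is a disjoint union of directed paths and directed cycles (a $2$-cycle would use the same edge of $G$ twice). If there are no cycles, the underlying graph is a linear forest with $h$ edges, so $h\leq\LF(G)$, and Step 1 gives equality.

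The main obstacle is excluding directed cycles, or compensating for them. My first attempt would be a height argument.
\begin{itemize}
\item Note that $\hgt(\inord_<(\mathcal{J}_G))=\hgt_R(\mathcal{J}_G)=h$.
\item A directed cycle on vertex set $C$ contributes $\smallAbs{C}$ coprime monomials $x_{a_1}y_{a_2},\ldots,x_{a_m}y_{a_1}$.
\item Using \cref{cor:height} with a suitable $S\in\mathcal{C}(G)$ (for instance $S=\varnothing$ restricted to the component containing $C$), I would try to show that these monomials cannot all be part of a regular sequence of length $h$. The hope is that a cycle uses as many arcs as vertices, which should force more than $\smallAbs{V}-c(S)+\smallAbs{S}$ independent initial terms on that component.
\end{itemize}
If this fails, the fallback is an exchange argument. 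Deleting one arc from each cycle, and re-routing through other edges of $G$ that join cycles to paths, should produce a linear forest with $h$ edges.
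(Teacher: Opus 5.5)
There is a genuine gap in Step 3, at the point you flag yourself. Steps 1 and 2 are correct: they give $\hgt_R(\mathcal{J}_G)\geq\LF(G)$ and the direction ($\Leftarrow$). The paper does not prove this statement; it quotes it from Herzog--Hibi--Moradi.

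The problem is that you never rule out directed cycles, and this includes $2$-cycles. Since $x_ay_b$ and $x_by_a$ are coprime, ``using the same edge of $G$ twice'' is not yet a contradiction. Neither proposed route can work, because both use only these facts: the initial terms are pairwise coprime monomials $x_ay_b$ along edges of $G$, and there are exactly $h$ of them. That data does not imply the conclusion.

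Take the graph of \cref{example:bipCE}. It is connected and $\hgt_R(\mathcal{J}_G)=\smallAbs{V(G)}-1=9$. It has a spanning subgraph consisting of the path $6,1,7,2$ and the cycle $3,8,4,9,5,10$. Orienting these gives nine pairwise coprime monomials $x_6y_1,x_1y_7,x_7y_2,x_3y_8,x_8y_4,x_4y_9,x_9y_5,x_5y_{10},x_{10}y_3$, exactly $h$ of them, so no height count on that component excludes the cycle.

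Yet $G$ is not traceable. The leaves $2$ and $6$ force a Hamiltonian path to begin $6,1,9$ and end $3,7,2$, while $4,5,8,10$ induce only the edges $\{4,8\}$ and $\{5,10\}$. So $\LF(G)=8<9$, and no exchange argument can produce a linear forest with $h$ edges. Any correct proof must therefore use the fact that all the $\inord_<(f_k)$ come from a single monomial order.

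Here is the missing step. Write $f=\sum_{\{i,j\}\in E(G)}c_{ij}f_{i,j}\in(\mathcal{J}_G)_2$. Then $x_ay_b$ occurs in $f$ iff $c_{ab}\neq0$ iff $x_by_a$ occurs. So $\inord_<(f)=x_ay_b$ forces $x_ay_b>x_by_a$, and every arc $a\to b$ of your digraph satisfies this inequality.

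Suppose $a_1\to a_2\to\cdots\to a_m\to a_1$ were a directed cycle with $m\geq2$, indices taken mod $m$. Multiplying the $m$ strict inequalities $x_{a_i}y_{a_{i+1}}>x_{a_{i+1}}y_{a_i}$, which is allowed since monomial orders respect multiplication, gives $\prod_i x_{a_i}y_{a_{i+1}}>\prod_i x_{a_{i+1}}y_{a_i}$. Both sides equal $\prod_i x_{a_i}y_{a_i}$, a contradiction. Hence only directed paths occur, and the rest of your Step 3 goes through.

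Equivalently, $a\prec b\iff x_ay_b>x_by_a$ is a strict total order on $V(G)$. Transitivity follows by the same argument: $x_by_b\cdot x_ay_c>x_by_b\cdot x_cy_a$ forces $x_ay_c>x_cy_a$. After relabelling the vertices along $\prec$, one has $\inord_<(f_{i,j})=x_iy_j$ for all $i<j$, exactly as for lex in your Step 1. This is consistent with the note after the statement, which allows one to work with lex throughout.
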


Alternative combinatorial characterisations of binomial edge ideals of K\"{o}nig type were given by LaClair in \cite[Lemma 5.3]{laclairInvariantsBinomialEdge2025} and the author in \cite[Theorem 2.4]{williamsLFCoversBinomialEdge2023}.

\begin{note}
	\cite[Corollary 3.4]{herzogGradedIdealsKonig2022} shows that under the standard lex ordering
	\begin{equation*}
		x_1>\cdots>x_n>y_1>\cdots>y_n
	\end{equation*}
	if a binomial edge ideal is of K\"{o}nig type then we may assume that $\bm{\mathrm{f}}$ is given by the $f_{i,j}\in R$ corresponding to the edges $\{i,j\}\in E(F)$ for any maximum linear forest $F$ in $G$. Then, after fixing the standard lex ordering, we do not need to be concerned with specifying ``K\"{o}nig type with respect to $\bm{\mathrm{f}}$'' for binomial edge ideals.
\end{note}

\begin{corollary}
	Let $G$ be a graph. Then the property of $\mathcal{J}_G$ being of K\"{o}nig type can be checked componentwise on the connected components of $G$.
\end{corollary}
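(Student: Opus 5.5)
The plan is to reduce the statement to \cref{thm:heightLFKT} and prove that both sides of the equality $\hgt_R(\mathcal{J}_G)=\LF(G)$ are additive over connected components. Write $G=G_1\sqcup\cdots\sqcup G_r$, where the $G_i$ are the connected components of $G$. By \cref{thm:heightLFKT}, $\mathcal{J}_G$ is of K\"{o}nig type if and only if $\hgt_R(\mathcal{J}_G)=\LF(G)$. Likewise, each $\mathcal{J}_{G_i}$ is of K\"{o}nig type if and only if $\hgt_R(\mathcal{J}_{G_i})=\LF(G_i)$. These are ideals of the same ring $R$, which is permitted since each $G_i$ has at most $n$ vertices. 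If necessary, I will relabel the vertices of $G_i$ as $\{1,\ldots,\smallAbs{V(G_i)}\}$; relabelling is an automorphism of $R$ permuting variables and preserves both height and $\LF$.

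First, applying \cref{cor:addHeight} repeatedly gives
\begin{equation*}
	\hgt_R(\mathcal{J}_G)=\sum_{i=1}^r\hgt_R(\mathcal{J}_{G_i}).
\end{equation*}

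Second, I will check that $\LF(G)=\sum_i\LF(G_i)$. A linear forest $F$ in $G$ restricts to a linear forest $F\cap G_i$ in each $G_i$, since every path lies inside a single component, and $\smallAbs{E(F)}=\sum_i\smallAbs{E(F\cap G_i)}$. Conversely, a disjoint union of linear forests in the $G_i$ is a linear forest in $G$. Taking maxima gives the claimed equality.

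Third, I will show that $\hgt_R(\mathcal{J}_{G_i})\geq\LF(G_i)$ holds for every $i$. If $\LF(G_i)$ could exceed the height, then equality for the sum would not imply equality in each component, so this inequality is what makes the argument work. Here \cref{def:KT} is useful: the binomials $f_{e}$ for $e\in E(F)$, with $F$ a maximum linear forest, have initial terms under lex that form a regular sequence. Indeed, for a path $i_1<\cdots$ suitably ordered the initial monomials $x_iy_j$ are pairwise coprime, and the cited \cite[Corollary 3.4]{herzogGradedIdealsKonig2022} records this. Hence the ideal they generate has height $\smallAbs{E(F)}$, which gives $\hgt_R(\mathcal{J}_{G_i})\geq\LF(G_i)$ because heights of initial ideals agree with heights of the ideals.

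With this inequality in hand, the sum $\sum_i(\hgt_R(\mathcal{J}_{G_i})-\LF(G_i))$ is a sum of nonnegative terms. It vanishes if and only if every term vanishes, and so $\mathcal{J}_G$ is of K\"{o}nig type if and only if every $\mathcal{J}_{G_i}$ is.

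The main obstacle is justifying the inequality $\hgt_R(\mathcal{J}_{G})\geq\LF(G)$ in the third step. I expect to take it from the proof of \cref{thm:heightLFKT} in \cite{herzogGradedIdealsKonig2022}, where it is established, rather than reprove it in full.
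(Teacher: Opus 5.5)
Your proposal is correct and follows the paper's route: additivity of height via \cref{cor:addHeight}, additivity of $\LF$ over components, then \cref{thm:heightLFKT}. You rightly make explicit the inequality $\hgt_R(\mathcal{J}_{G_i})\geq\LF(G_i)$ that the ``only if'' direction needs and the paper's one-line proof leaves implicit; it follows more cleanly, with no initial-term argument, from \cref{prop:scHeight}, \cref{prop:LFPi} and \cref{prop:scPiIneq}, which give $\hgt_R(\mathcal{J}_G)-\LF(G)=\pi(G)-\scstar(G)\geq0$ (your regular-sequence argument also works, but only after relabelling so each path of $F$ is increasing, since e.g.\ $x_1y_2$ and $x_1y_3$ are not coprime).
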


\begin{proof}
	This follows immediately from \cref{cor:addHeight} and \cref{thm:heightLFKT}, since clearly
	\begin{equation*}
		\LF(G_1\sqcup G_2)=\LF(G_1)+\LF(G_2)
	\end{equation*}
	for graphs $G_1$ and $G_2$ with disjoint vertex sets.
\end{proof}

As noted in the \hyperref[sec:intro]{introduction}, ideals of K\"{o}nig type were introduced to try and generalise the properties of K\"{o}nig graphs which lead to the characteristic independence of the Cohen-Macaulay property of their edge ideals. This is achieved in the case of binomial edge ideals:

\begin{theorem}\thmCite[Corollary 3.8]{herzogGradedIdealsKonig2022}\label{thm:KTCMIndep}
	Let $G$ be a graph, and suppose that $\mathcal{J}_G$ is of K\"{o}nig type. Then the Cohen-Macaulayness of $R/\mathcal{J}_G$ does not depend on the characteristic of $k$.
\end{theorem}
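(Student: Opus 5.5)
This is a citation of \cite[Corollary 3.8]{herzogGradedIdealsKonig2022}, so the plan below reconstructs the argument rather than just invoking it. The proof proceeds by Gröbner degeneration to a monomial ideal attached to a K\"{o}nig-type structure, together with the fact that the Cohen--Macaulay property passes between an ideal and its initial ideal in a controlled way.

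First, by \cref{thm:heightLFKT} and the note following it, choose a maximum linear forest $F$ in $G$ with $\smallAbs{E(F)}=h\defeq\hgt_R(\mathcal{J}_G)$. Work with the standard lex order, under which $\inord_<(f_{i,j})=x_iy_j$ for $i<j$. Set $g_1,\ldots,g_h$ to be the $f_{i,j}$ with $\{i,j\}\in E(F)$. Their initial monomials form a regular sequence; after relabelling vertices along each path this is clear, since the monomials $x_iy_j$ are then pairwise coprime. Hence $g_1,\ldots,g_h$ is a regular sequence in $R$ contained in $\mathcal{J}_G$.

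The key step is to compare $R/\mathcal{J}_G$ with $\bar R\defeq R/(g_1,\ldots,g_h)$, which is a complete intersection of dimension $\dim R/\mathcal{J}_G$. The ideal $\mathcal{J}_G/(g_1,\ldots,g_h)$ then has height $0$ in $\bar R$. Moreover, $R/\mathcal{J}_G$ is Cohen--Macaulay if and only if $\bar R/\bar{\mathcal{J}}$ is Cohen--Macaulay; this holds because depth and dimension are both unchanged on passing to the quotient. This step alone does not yield characteristic independence.

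Instead, I would follow Herzog--Hibi--Moradi and use the squarefree initial ideal $\inord_<(\mathcal{J}_G)$. By \cite{herzogBinomialEdgeIdeals2010} it is generated by squarefree monomials determined purely combinatorially by admissible paths of $G$, and this description does not depend on the characteristic. By the Conca--Varbaro theorem on squarefree Gröbner degenerations, $R/\mathcal{J}_G$ is Cohen--Macaulay if and only if $R/\inord_<(\mathcal{J}_G)$ is. The latter is the Stanley--Reisner ring of a simplicial complex $\Delta$, whose Cohen--Macaulayness depends a priori on the characteristic through Reisner's criterion.

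The K\"{o}nig-type hypothesis enters at this point. The monomials $x_iy_j$ for $\{i,j\}\in E(F)$ form a regular sequence of squarefree monomials in $\inord_<(\mathcal{J}_G)$ of length equal to its height. The main obstacle is to extract characteristic independence from this. I would mimic Crupi--Rinaldo--Terai. A regular sequence of $h$ disjoint edges in the Stanley--Reisner ideal, with $h=\operatorname{codim}\Delta$, lets one polarise or specialise: modding out by the linear forms $x_i-y_j$ for these edges gives a regular sequence on the Cohen--Macaulay side. This reduces to a squarefree monomial ideal in fewer variables of height $0$ relative to the new ring, i.e.\ to a complex whose facets are all of full dimension. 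One then argues that such a complex is Cohen--Macaulay exactly when it is a simplex, or a condition of that kind, which is characteristic-free. I expect this last combinatorial reduction to be the hard part, and I would follow \cite{herzogGradedIdealsKonig2022} closely there.
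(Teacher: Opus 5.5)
Your outline is sound up to the Conca--Varbaro step. (The paper gives no argument of its own here; it only cites \cite{herzogGradedIdealsKonig2022}.) Relabelling increasingly along the paths of a maximum linear forest $F$ makes the monomials $x_iy_j$, $\{i,j\}\in E(F)$, pairwise coprime. The initial ideal $\inord_<(\mathcal{J}_G)$ is squarefree and given by admissible paths in every characteristic. Conca--Varbaro then reduces the question to $A\defeq R/\inord_<(\mathcal{J}_G)$. The gap is the final step, which is exactly where characteristic independence has to come from, and what you sketch there does not work:
\begin{itemize}
\item Substituting $y_j\mapsto x_i$ sends $x_iy_j$ to $x_i^2$. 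So the image of $\inord_<(\mathcal{J}_G)$ is not squarefree, and there is no longer a simplicial complex.
\item The image still has height $h$, not $0$, in the polynomial ring on the remaining $2n-h$ variables.
\item These $h$ forms only lower the dimension to $2n-2h$, so they are not a full system of parameters.
\item The criterion ``Cohen--Macaulay exactly when it is a simplex'' is unjustified. If it means pure complexes it is false, since a triangulated circle is Cohen--Macaulay.
\item Nothing in the sketch explains why regularity of the forms $x_i-y_j$ on $A$ could be decided independently of $\operatorname{char}k$. The $\bar R$ step does not help, since $\bar R/\bar{\mathcal{J}}\cong R/\mathcal{J}_G$.
\end{itemize}

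The missing idea is a length-versus-multiplicity test on an explicit linear system of parameters. Let $\theta$ consist of the $h$ forms $x_i-y_j$ ($\{i,j\}\in E(F)$, $i<j$) together with every variable dividing none of the $x_iy_j$. For a path $v_1,\ldots,v_m$ of $F$ these extra variables are $y_{v_1}$ and $x_{v_m}$, plus both variables of each index outside $V(G)$. The monomials $x_iy_j$ use $2h$ distinct variables, so $\theta$ has $h+(2n-2h)=2n-h=\dim A$ elements. Modulo $\theta$, the ideal $\inord_<(\mathcal{J}_G)$ becomes a monomial ideal in the surviving variables $x_i$ that contains every $x_i^2$. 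Hence $A/\theta A$ is Artinian and $\theta$ is a linear system of parameters for $A$. For such $\theta$ one has $\dim_k A/\theta A\geq e(A)$, with equality if and only if $A$ is Cohen--Macaulay. Both sides are independent of $\operatorname{char}k$:
\begin{itemize}
\item $\dim_k A/\theta A$ is the number of standard monomials of a monomial ideal obtained from $\inord_<(\mathcal{J}_G)$ by substitution;
\item $e(A)$ is the number of facets of maximal dimension of $\Delta$.
\end{itemize}
So Cohen--Macaulayness of $A$, and hence of $R/\mathcal{J}_G$, does not depend on the characteristic. This is precisely where the K\"{o}nig-type hypothesis $\smallAbs{E(F)}=h$ enters: if $\LF(G)<h$, the analogous set has $2n-\LF(G)>\dim A$ elements and cannot be a system of parameters.
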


\begin{note}
	It is conjectured that the Cohen-Macaulay property of \textit{any} binomial edge ideal is independent of the characteristic of $k$. This would follow, for example, from \cite[Conjecture 1.1]{bologniniCohenMacaulayBinomial2022}.
\end{note}

\pagebreak

\subsection{Examples}\label{subsec:KTExamples}

Many important classes of graphs have binomial edge ideals of K\"{o}nig type.

\begin{example}
	It is easily verified that path, cycle, and complete graphs have binomial edge ideals of K\"{o}nig type.
\end{example}

\begin{example}\label{example:traceable}
	Traceable graphs have binomial edge ideals of K\"{o}nig type (see, for example, \cite[Proposition 3.6]{herzogGradedIdealsKonig2022}). We will recover this result in \cref{prop:traceableKT}.
\end{example}

\begin{note}\label{note:almostAllKT}
	P\'{o}sa proves in \cite{posaHamiltonianCircuitsRandom1976} that almost all graphs are Hamiltonian, and so, in particular, they are traceable. Then \cref{example:traceable} tells us that almost all graphs have binomial edge ideals of K\"{o}nig type.
\end{note}

\begin{example}
	Forests have binomial edge ideals of K\"{o}nig type (see, for example, \cite[Theorem 5.9]{laclairInvariantsBinomialEdge2025} or \cite[Theorem 2.33]{williamsLFCoversBinomialEdge2023}). We will recover this result in \cref{prop:forestKT}.
\end{example}

\begin{example}
	Complete bipartite graphs and trivially perfect graphs (that is, graphs containing no induced $P_4$ or $C_4$) have binomial edge ideals of K\"{o}nig type by \cite[Proposition 2.21]{williamsLFCoversBinomialEdge2023} and \cite[Proposition 2.23]{williamsLFCoversBinomialEdge2023} respectively. We will generalise these results in \cref{thm:cocomKT}.
\end{example}

Further such classes will be demonstrated in \cref{sec:KTClasses}.

\subsection{Non-Examples}\label{subsec:KTNonExamples}

Not every graph has a binomial edge ideal of K\"{o}nig type.

\begin{example}\label{example:net}
	\cite[p.~313 and Figure 2]{herzogGradedIdealsKonig2022} shows that $\mathcal{J}_G$ is not of K\"{o}nig type for
	\begin{equation*}
		G=\quad\begin{tikzpicture}[mygraph,x=0.7cm,y=0.7cm,mygraph,baseline={([yshift=-0.5ex]current bounding box.center)}]
   	   		\node(1) at (0,0) {};
    		\node(2) at ([shift=(-120:1)]1) {};
        	\node(3) at ([shift=(0:1)]2) {};
			\node(4) at ([shift=(90:1)]1) {};
			\node(5) at ([shift=(-150:1)]2) {};
        	\node(6) at ([shift=(-30:1)]3) {};
	  		\foreach \from/\to in {1/2,1/3,1/4,2/3,2/5,3/6}
				\draw[-] (\from) -- (\to);
    	\end{tikzpicture}
	\end{equation*}
	(the \textit{net}). It can be checked using \citeMacaulay{} that this example is minimal in the sense that it is the only graph on $6$ vertices or fewer with $\mathcal{J}_G$ not of K\"{o}nig type. It can also be checked that $\mathcal{J}_G$ is Cohen-Macaulay in this case, and so Cohen-Macaulayness is not enough to guarantee K\"{o}nig type.
\end{example}

\begin{example}\label{example:bipCE}
	Bipartite graphs do not necessarily have binomial edge ideals of K\"{o}nig type. \cite[Example 2.37]{williamsLFCoversBinomialEdge2023} shows that $\mathcal{J}_G$ is not of K\"{o}nig type for
	\begin{align*}
		G&=\quad\begin{tikzpicture}[mygraph,x=0.7cm,y=0.7cm,baseline={([yshift=-0.5ex]current bounding box.center)}]
      		\node(1) at (0,0) {};
			\node(2) at (1,0) {};
			\node(3) at (2,0) {};
			\node(4) at (3,0) {};
			\node(5) at (4,0) {};
			\node(6) at (0,1) {};
			\node(7) at (1,1) {};
			\node(8) at (2,1) {};
			\node(9) at (3,1) {};
			\node(10) at (4,1) {};
	   		\foreach \from/\to in {1/6,1/7,1/9,2/7,3/7,3/8,3/10,4/8,4/9,5/9,5/10}
       	 		\draw[-] (\from) -- (\to);
    	\end{tikzpicture}\\[0.3cm]
		&=\quad\begin{tikzpicture}[mygraph,x=0.7cm,y=0.4cm,baseline={([yshift=-0.5ex]current bounding box.center)}]
	       	\node(1) at (1,3) {};
			\node(2) at (0,0) {};
			\node(3) at (2,0) {};
			\node(4) at (2,2) {};
			\node(5) at (3,3) {};
			\node(6) at (0,3) {};
			\node(7) at (1,0) {};
			\node(8) at (2,1) {};
			\node(9) at (2,3) {};
			\node(10) at (3,0) {};
		    \foreach \from/\to in {1/6,1/7,1/9,2/7,3/7,3/8,3/10,4/8,4/9,5/9,5/10}
	   	     	\draw[-] (\from) -- (\to);
		\end{tikzpicture}
	\end{align*}
	It can be checked using \citeMacaulay{} that this example is minimal in the sense that all bipartite graphs with fewer than $10$ vertices have binomial edge ideals of K\"{o}nig type, and there is only one other such example on $10$ vertices, but that contains an additional edge.
\end{example}

\begin{example}\label{example:KTDelVert}
	The property of $\mathcal{J}_G$ being of K\"{o}nig type is not closed under vertex deletion. For example, if we add a vertex $v$ to the net connecting two pendant vertices, so
	\begin{equation*}
		G=\quad\begin{tikzpicture}[mygraph,x=0.7cm,y=0.7cm,baseline={([yshift=-0.5ex]current bounding box.center)}]
      		\node(1) at (0,0) {};
    		\node(2) at ([shift=(-120:1)]1) {};
        	\node(3) at ([shift=(0:1)]2) {};
			\node(4) at ([shift=(90:1)]1) {};
			\node(5) at ([shift=(-150:1)]2) {};
        	\node(6) at ([shift=(-30:1)]3) {};
			\node(7)[label={[label distance=2pt]150:$v$}] at ([shift=(180:{sqrt(5)/2})]1) {};
	  		\foreach \from/\to in {1/2,1/3,1/4,2/3,2/5,3/6,4/7,5/7}
				\draw[-] (\from) -- (\to);
    	\end{tikzpicture}
	\end{equation*}
	then $G$ is traceable, and so $\mathcal{J}_G$ is of K\"{o}nig type (see \cref{example:traceable}). However $G\setminus\{v\}$ is the net, which we have seen in \cref{example:net} does not have a binomial edge ideal of K\"{o}nig type.
\end{example}

\pagebreak

\subsection{Some Results on Unmixed Binomial Edge Ideals of K\"{o}nig Type}\label{subsec:UMKTOverview}

Herzog, Hibi and Moradi classified unmixed binomial edge ideals of K\"{o}nig type in terms of traceability:
\begin{proposition}\label{prop:HHMChar}
	Let $G$ be a graph, and suppose that $\mathcal{J}_G$ is unmixed. Then $\mathcal{J}_G$ is of K\"{o}nig type if and only if every connected component of $G$ is traceable.
\end{proposition}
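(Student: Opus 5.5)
By the componentwise results (the corollary following \cref{thm:heightLFKT}, together with \cref{cor:addHeight}), I will first reduce to the case where $G$ is connected. Unmixedness and K\"{o}nig type can both be checked on connected components, and traceability is a componentwise condition by definition. So assume $G$ is connected with $m=\smallAbs{V(G)}$ vertices. The plan is to compute both sides of \cref{thm:heightLFKT} explicitly.

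\textbf{Height.} Since $\varnothing\in\mathcal{C}(G)$ and $c(\varnothing)=1$, \cref{lem:height} gives $\hgt_R(P_\varnothing(G))=m-1$. By \cref{thm:BEIPD}, $P_\varnothing(G)$ is a minimal prime of $\mathcal{J}_G$. Unmixedness means all minimal primes have the same height, so $\hgt_R(\mathcal{J}_G)=m-1$. Equivalently, $c(S)=\smallAbs{S}+1$ for every $S\in\mathcal{C}(G)$, although we only need the value of the height.

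\textbf{Linear forests.} A linear forest on $m$ vertices with $p$ path components has exactly $m-p$ edges. Any linear forest in $G$ can be taken to span all of $V(G)$ by adding the missed vertices as trivial paths, which does not change the edge count. Hence $\LF(G)=m-\pi'$, where $\pi'$ is the minimum number of vertex-disjoint paths in $G$ covering $V(G)$. In particular, $\LF(G)\le m-1$, with equality if and only if $V(G)$ is covered by a single path, that is, $G$ is traceable.

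\textbf{Conclusion.} Combining these, \cref{thm:heightLFKT} says that $\mathcal{J}_G$ is of K\"{o}nig type if and only if $\LF(G)=m-1$, that is, if and only if $G$ is traceable. For disconnected $G$, apply this to each component and use the componentwise reductions. There is no real obstacle here. The only point needing care is the reduction to components, specifically that unmixedness of $\mathcal{J}_G$ passes to each component; this is the ``only if'' direction of \cref{cor:addHeight}. The identification of $P_\varnothing$ as a minimal prime also needs care, but it is immediate from \cref{thm:BEIPD}.
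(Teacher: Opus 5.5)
Your proof is correct, but it takes a different route from the paper. The paper's proof is simply a citation of \cite[Proposition 3.6]{herzogGradedIdealsKonig2022}, whereas you derive the statement from the more basic characterisation \cref{thm:heightLFKT}. In your argument, unmixedness pins the height of each connected component $G_i$ at $\smallAbs{V(G_i)}-1$ via $P_\varnothing(G_i)$. Your linear-forest paragraph is exactly \cref{prop:LFPi}, so K\"{o}nig type reduces to $\pi(G_i)=1$.

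In the paper's own language, your argument is the special case of \cref{thm:main} in which $\scstar(G_i)=1$. Unmixedness forces $c(S)-\smallAbs{S}=1$ for every $S\in\mathcal{C}(G_i)$, and hence $\scstar(G_i)=1$ by \cref{prop:scSets}. Your route therefore makes the section self-contained modulo \cref{thm:heightLFKT}, and shows the proposition follows formally from the paper's framework rather than being an independent external input.

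What the citation gives in addition is the unconditional implication ``traceable $\Rightarrow$ K\"{o}nig type'' (cf.\ \cref{example:traceable}). Your argument uses unmixedness in both directions. Without unmixedness, the ``if'' direction would also need the general inequality $\LF(G)\le\hgt_R(\mathcal{J}_G)$, which the paper obtains from \cref{prop:scPiIneq} in \cref{prop:traceableKT}.
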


\begin{proof}
	This is immediate from \cite[Proposition 3.6]{herzogGradedIdealsKonig2022}.
\end{proof}

We will give an \hyperref[cor:unmixed]{alternative classification} in \cref{sec:umKT}.

\begin{note}\label{note:C4}
	\cref{prop:HHMChar} does \textit{not} say that $\mathcal{J}_G$ is unmixed of K\"{o}nig type if and only if every connected component of $G$ is traceable. For example, label
	\begin{equation*}
		C_4=\quad\begin{tikzpicture}[mygraph,x=0.8cm,y=0.8cm,label distance=2pt,baseline={([yshift=-0.5ex]current bounding box.center)}]
     		\node[label={135:$1$}](1) at (0,1) {};
      		\node[label={45:$2$}](2) at (1,1) {};
     		\node[label={-45:$3$}](3) at (1,0) {};
      		\node[label={-135:$4$}](4) at (0,0) {};
			\foreach \from/\to in {1/2,1/4,2/3,3/4}
				\draw[-] (\from) -- (\to);
    	\end{tikzpicture}
	\end{equation*}
	This is clearly traceable, but
	\begin{equation*}
		\mathcal{C}(C_4)=\{\varnothing,\{1,3\},\{2,4\}\}
	\end{equation*}
	which yields associated primes of heights $3$, $4$, and $4$, so $\mathcal{J}_{C_4}$ is not unmixed.
\end{note}

\begin{example}
	We saw in \cref{example:KTDelVert} that the property of $\mathcal{J}_G$ being of K\"{o}nig type is not closed under vertex deletion. Neither is the property of $\mathcal{J}_G$ being unmixed of K\"{o}nig type. For example, let
	\begin{equation*}
		G=\quad\begin{tikzpicture}[mygraph,x=0.8cm,y=0.6cm,label distance=2pt,baseline={([yshift=-0.5ex]current bounding box.center)}]
     		\node[label={180:$1$}](1) at (0,1) {};
      		\node[label={0:$2$}](2) at (1,3) {};
     		\node[label={-90:$3$}](3) at (1,2) {};
      		\node[label={180:$4$}](4) at (1,0) {};
			\node[label={0:$5$}](5) at (2,1) {};
			\foreach \from/\to in {1/2,1/3,1/4,1/5,2/5,3/5,4/5}
				\draw[-] (\from) -- (\to);
    	\end{tikzpicture}
	\end{equation*}
	that is, $K_{1,3}\vee K_1$, the cone over the \textit{claw}. This is traceable (for example, $2\to1\to3\to5\to4$), and
	\begin{equation*}
		\mathcal{C}(G)=\{\varnothing,\{1,5\}\}
	\end{equation*}
	which yields associated primes of height $4$, so $\mathcal{J}_G$ is unmixed.

	However, $G\setminus\{5\}=K_{1,3}$, the claw. $\mathcal{J}_{K_{1,3}}$ is easily verified to be of K\"{o}nig type, but
	\begin{equation*}
		\mathcal{C}(K_{1,3})=\{\varnothing,\{1\}\}
	\end{equation*}
	which yields associated primes of heights $3$ and $2$, so $\mathcal{J}_{K_{1,3}}$ is not unmixed.

	Furthermore, $\mathcal{J}_G$ is not Cohen-Macaulay, since $\mathcal{C}(G)$ does not satisfy the conditions of \cite[Theorem 3.5]{bologniniCohenMacaulayBinomial2022}. This demonstrates that being unmixed of K\"{o}nig type is not sufficient to guarantee Cohen-Macaulayness (we have already seen that the converse does not hold either in \cref{example:net}).
\end{example}

Much research has been done on binomial edge ideals over fields of prime characteristic, particularly the study of their $F$-singularities. In the case of binomial edge ideals of K\"{o}nig type, LaClair and McCullough proved the following:
\begin{proposition}\thmCite[Proposition 10.3]{laclairFpurityBinomialEdge2026}
	Suppose that $k$ has prime characteristic $p>0$, and let $G$ be a graph. Then if $\mathcal{J}_G$ is unmixed of K\"{o}nig type, $R/\mathcal{J}_G$ is $F$-pure.
\end{proposition}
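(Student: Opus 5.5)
The plan is to use Fedder's criterion. Write $I=\mathcal{J}_G$, $\mathfrak{m}=(x_1,\ldots,x_n,y_1,\ldots,y_n)$ and $h=\hgt_R(I)$. Fedder's criterion says $R/I$ is $F$-pure if and only if $(I^{[p]}:I)\not\subseteq\mathfrak{m}^{[p]}$. So it suffices to exhibit one explicit element of $I^{[p]}:I$ lying outside $\mathfrak{m}^{[p]}$.

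Since $I$ is unmixed of K\"{o}nig type, \cref{prop:HHMChar} makes every connected component of $G$ traceable. By \cref{thm:heightLFKT} and the note following it, we may then take $\bm{\mathrm{f}}$ to consist of the $f_{i,j}$ along Hamiltonian paths of the components. Relabel vertices so that each path runs through consecutive labels. Then, in the lex order, the initial terms are $x_iy_{i+1}$, and these are squarefree monomials in pairwise distinct variables, $h$ of them. They form a regular sequence, so $f_1,\ldots,f_h$ is also a regular sequence, and $\mathfrak{a}\defeq(f_1,\ldots,f_h)\subseteq I$ is a complete intersection of height $h$. Set $f=f_1\cdots f_h$. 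The leading term of $f^{p-1}$ is $\prod(x_iy_{i+1})^{p-1}$, which has every exponent at most $p-1$. Hence $f^{p-1}\notin\mathfrak{m}^{[p]}$, because $\mathfrak{m}^{[p]}$ is a monomial ideal, so its initial ideal is itself.

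The main step is to show $f^{p-1}I\subseteq I^{[p]}$. Frobenius is flat on the regular ring $R$, so $\operatorname{Ass}(R/I^{[p]})=\operatorname{Ass}(R/I)$. By \cref{thm:BEIRadical} and unmixedness, this is the set of minimal primes $P$ of $I$, all of height $h$. It therefore suffices to check the containment after localising at each such $P$.

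Fix such a $P$. Then $R_P$ is regular local of dimension $h$, and $I_P=PR_P$ because $I$ is radical. Hence $I_P$ is generated by a regular system of parameters $g_1,\ldots,g_h$. Since $\mathfrak{a}_P\subseteq I_P$, we can write $f_i=\sum_j a_{ij}g_j$ with $A=(a_{ij})$. The standard linkage computation then gives
\begin{equation*}
	f^{p-1}\equiv\det(A)^{p-1}(g_1\cdots g_h)^{p-1}\pmod{(g_1^p,\ldots,g_h^p)}.
\end{equation*}
Here one uses $\mathfrak{a}^{[p]}:\mathfrak{a}=\mathfrak{a}^{[p]}+(f^{p-1})$, the corresponding statement for $g$, and the fact that the transition matrix for Frobenius powers is $A^{[p]}$. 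The classical case $(g)^{[p]}:(g)=(g)^{[p]}+((g_1\cdots g_h)^{p-1})$ shows $(g_1\cdots g_h)^{p-1}\in I_P^{[p]}:I_P$. Hence $f^{p-1}\in(I^{[p]}:I)_P$ for every associated prime $P$, and so $f^{p-1}\in I^{[p]}:I$. Fedder's criterion then finishes the proof.

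The expected obstacle is this local step. One must verify the determinant congruence carefully; it is essentially the statement that $\det(A)$ generates the link. One must also handle the reduction from a global containment to containment at associated primes, which is where unmixedness is genuinely needed. Without it, embedded or lower-height components of $I^{[p]}$ would not be controlled by the height-$h$ complete intersection $\mathfrak{a}$.
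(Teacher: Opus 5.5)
The paper gives no proof of this statement; it quotes it from \cite[Proposition 10.3]{laclairFpurityBinomialEdge2026} as background, so there is no internal argument to compare against. Your proof is correct as a self-contained argument. Each ingredient is sound:
\begin{itemize}
	\item Fedder's criterion.
	\item The squarefree leading term of $f^{p-1}$, which forces $f^{p-1}\notin\mathfrak{m}^{[p]}$.
	\item The reduction of $f^{p-1}I\subseteq I^{[p]}$ to the associated primes of $R/I^{[p]}$. These coincide with the minimal primes of $I$ by flatness of Frobenius.
	\item The observation that radical plus unmixed makes $I_P$ the maximal ideal of an $h$-dimensional regular local ring.
\end{itemize}

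The step you flag as the main obstacle is in fact one line, and you can drop the determinant congruence. The congruence is true, but your linkage justification for it is only a sketch. Put $\mathfrak{b}=I_P=(g_1,\ldots,g_h)$. Each $f_i$ lies in $\mathfrak{b}$, so $f^{p-1}\in\mathfrak{b}^{h(p-1)}$. Hence
\begin{equation*}
	f^{p-1}\mathfrak{b}\subseteq\mathfrak{b}^{h(p-1)+1}\subseteq\mathfrak{b}^{[p]},
\end{equation*}
because every monomial of degree $h(p-1)+1$ in $h$ generators has some exponent at least $p$.

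This also shows that two of your ingredients are never used: the regularity of $f_1,\ldots,f_h$, and traceability via \cref{prop:HHMChar}. The argument needs only three things:
\begin{itemize}
	\item $I$ is radical and unmixed of height $h$.
	\item $I$ contains $h$ elements.
	\item The leading terms of those $h$ elements have squarefree product.
\end{itemize}
\cref{thm:heightLFKT} alone supplies the second and third. Take a linear forest with $\hgt_R(\mathcal{J}_G)$ edges and relabel so that each path has consecutive labels; you already do this relabelling. Regularity of $\mathfrak{a}$ would only matter on the other standard route. There one notes $f^{p-1}\mathfrak{a}\subseteq\mathfrak{a}^{[p]}$ directly, then passes to $I$ as an intersection of minimal primes of $\mathfrak{a}$ using closure properties of compatibly split ideals. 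Your localisation argument avoids that machinery entirely.
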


\subsection{Further Reading}

Further results on binomial edge ideals of K\"{o}nig type can be found in \cite[Section 3]{herzogGradedIdealsKonig2022}, \cite[Section 5]{laclairInvariantsBinomialEdge2025}, \cite{williamsLFCoversBinomialEdge2023}, and \cite[Section 10.1]{laclairFpurityBinomialEdge2026}.

\newpage

\section{A Graph-Theoretic Characterisation of Graphs with Binomial Edge Ideals of K\"{o}nig Type}\label{sec:mainResult}

\subsection{Path Covering \& Scattering Numbers}\label{subsec:PCSN}

We begin by defining several graph-theoretic invariants, and exploring some of their properties.

\begin{definition}\label{def:pi}
	Let $G$ be a graph. The \emph{path covering number} of $G$ is the minimum number of vertex-disjoint paths in $G$ required to cover its vertices. We denote this value by $\pi(G)$.
\end{definition}

\begin{note}
	Notation for this value varies across the literature. We follow that of \cite{deogun1ToughCocomparabilityGraphs1997}, $\pi_0$ is used in \cite{jungClassPosetsCorresponding1978}, and $\rho$ in \cite{giakoumakisScatteringNumberModular1997}. This list is not exhaustive, but addresses the most relevant papers for our purposes.
\end{note}

\begin{example}
	A graph $G$ is traceable if and only if $\pi(G)=1$.
\end{example}

\begin{proposition}\label{prop:LFPi}
	Let $G$ be a graph. Then
	\begin{equation*}
		\LF(G)=\smallAbs{V(G)}-\pi(G)
	\end{equation*}
\end{proposition}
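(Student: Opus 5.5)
The identity follows from the elementary fact that a linear forest with $p$ connected components on $m$ vertices has exactly $m-p$ edges, since each path component on $m_i$ vertices has $m_i-1$ edges. We treat an isolated vertex as a path $P_1$ with no edges. The plan is to prove the two inequalities $\LF(G)\geq\smallAbs{V(G)}-\pi(G)$ and $\LF(G)\leq\smallAbs{V(G)}-\pi(G)$ separately, by passing between spanning linear forests and path covers.

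For the first inequality, take a path cover of $G$ by $\pi(G)$ vertex-disjoint paths. Their union is a subgraph of $G$ in which every connected component is a path, so it is a linear forest in $G$. It covers all $\smallAbs{V(G)}$ vertices with $\pi(G)$ components, so it has $\smallAbs{V(G)}-\pi(G)$ edges. Hence $\LF(G)\geq\smallAbs{V(G)}-\pi(G)$.

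For the reverse inequality, take a maximum linear forest $F$ in $G$ with $\LF(G)$ edges. First we make it spanning by adding every vertex of $G$ not in $V(F)$ as an isolated vertex, i.e.\ a trivial path. This does not change the edge count. Now $F$ is a linear forest with vertex set $V(G)$ and some number $p$ of components. Its components are vertex-disjoint paths in $G$ covering $V(G)$, so $p\geq\pi(G)$. Therefore
\begin{equation*}
	\LF(G)=\smallAbs{V(G)}-p\leq\smallAbs{V(G)}-\pi(G)
\end{equation*}
Combining the two inequalities gives the claim.

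The only point needing care is the convention on what counts as a path. The path cover in \cref{def:pi} must allow single-vertex paths, and a linear forest must be allowed not to span $V(G)$. Both are handled by the padding with isolated vertices above, so I expect no real obstacle.
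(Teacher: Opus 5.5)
Your proof is correct and is essentially the paper's argument: both rest on the fact that a linear forest spanning $V(G)$ with $m$ components has exactly $\smallAbs{V(G)}-m$ edges. Your two inequalities, and the explicit padding of a maximum linear forest with isolated vertices, just spell out what the paper compresses into one remark: $\LF(G)$ is the maximum of $\smallAbs{E(F)}$ over spanning linear forests $F$, and $\pi(G)$ is the minimum number of components of such an $F$.
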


\begin{proof}
	Let $F$ be a linear forest covering every vertex of $G$ (allowing isolated vertices as components), and let $m$ be the number of connected components of $F$ (that is, $c_F(\varnothing)$). Since every component $F_i$ of $F$ is a path, we know that
	\begin{equation*}
		\smallAbs{E(F_i)}=\smallAbs{V(F_i)}-1
	\end{equation*}
	Then, because $F$ covers every vertex of $G$, we have
	\begin{equation*}
		\smallAbs{E(F)}=\mySum_{i=1}^{m}\smallAbs{E(F_i)}=\mySum_{i=1}^{m}(\smallAbs{V(F_i)}-1)=\smallAbs{V(G)}-m
	\end{equation*}
	Now, $\LF(G)$ is the maximum value of $\smallAbs{E(F)}$ among all linear forests covering every vertex of $G$, and the minimum value of $m$ for such forests is $\pi(G)$. The result then follows.
\end{proof}

The following invariant was introduced by Jung in \cite[Section 4]{jungClassPosetsCorresponding1978}:

\begin{definition}\label{def:sc}
	Let $G$ be a graph. Then we define the \emph{scattering number} of $G$ as
	\begin{equation*}
		\scnum(G)\defeq\max\{\text{$c_G(S)-\smallAbs{S}:S\subseteq V(G)$ and $c_G(S)\neq1$}\}
	\end{equation*}
\end{definition}

\begin{note}
	Again, notation for this value varies. We follow that of \cite{deogun1ToughCocomparabilityGraphs1997}, whilst $s$ is used in \cite{jungClassPosetsCorresponding1978} and \cite{giakoumakisScatteringNumberModular1997}.
\end{note}

The scattering number has been described as a measure of the ``vulnerability'' of a network to disconnection (see, for example, \cite{kirlangicMeasureGraphVulnerability2002}). The lower the scattering number, the harder it is to disrupt the network by removing nodes. For example, the complete graph $K_n$ is maximally ``resilient'', with $\scnum(K_n)=-n$. The star $K_{1,n-1}$ however is maximally ``fragile'': we have $\scnum(K_{1,n-1})=n-2$ (the maximum value for a connected graph), since removing the central vertex completely disconnects the remainder of the graph.

The condition $c_G(S)\neq1$ is imposed to allow finer distinctions between connected graphs. Without it, $\scnum(G)$ would always be at least $1$ by taking $S=\varnothing$. However, for our purposes, removing this restriction is more natural.

\begin{definition}\label{def:scStar}
	Let $G$ be a graph. Then we define the \emph{unrestricted scattering number} of $G$ as
	\begin{equation*}
		\scstar(G)\defeq\max\{c_G(S)-\smallAbs{S}:S\subseteq V(G)\}
	\end{equation*}
	Equivalently (since $G$ has at least $1$ connected component, so $c_G(\varnothing)-\smallAbs{\varnothing}\geq1$),
	\begin{equation*}
		\scstar(G)\defeq\max\{1,\scnum(G)\}
	\end{equation*}
\end{definition}

\begin{note}
	This is not established terminology or notation.
\end{note}

\pagebreak

We do not need to consider all subsets of $V(G)$ to compute this value.
\begin{proposition}\label{prop:scSets}
	Let $G$ be a graph. Then
	\begin{equation*}
		\scstar(G)=\max\{c_G(S)-\smallAbs{S}:S\in\mathcal{C}(G)\}
	\end{equation*}
\end{proposition}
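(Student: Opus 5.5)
Since $\mathcal{C}(G)\subseteq 2^{V(G)}$, the right-hand side is at most $\scstar(G)$ by \cref{def:scStar}. For the reverse inequality, fix any $S\subseteq V(G)$. I will construct some $T\in\mathcal{C}(G)$ with $c_G(T)-\smallAbs{T}\geq c_G(S)-\smallAbs{S}$. Taking $S$ to attain the maximum in \cref{def:scStar} then gives the equality.

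The construction is a pruning procedure on $S$. If $S=\varnothing$ or $S\in\mathcal{C}(G)$, set $T=S$. Otherwise, by the definition of $\mathcal{C}(G)$, there is some $v\in S$ with $c(S\setminus\{v\})\geq c(S)$. Replacing $S$ by $S'=S\setminus\{v\}$ gives
\begin{equation*}
	c(S')-\smallAbs{S'}\geq c(S)-\smallAbs{S}+1>c(S)-\smallAbs{S}.
\end{equation*}
This step strictly decreases $\smallAbs{S}$, so iterating it terminates. It ends either at a set in $\mathcal{C}(G)$ or at $\varnothing$, which lies in $\mathcal{C}(G)$ by definition. Along the way $c-\smallAbs{\cdot}$ never decreases.

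The one point needing care is the claim $c(S\setminus\{v\})\geq c(S)$ whenever the defining inequality $c(S\setminus\{v\})<c(S)$ fails. This is immediate, since failing a strict inequality of integers means exactly $\geq$. So there is no genuine obstacle here.

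As a sanity check, I also note that adding back a vertex can only merge components or create one new component. Hence $c(S\setminus\{v\})\leq c(S)+1$, which is consistent with the procedure. This check is not needed for the proof.

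Finally, I observe that the maximum over $\mathcal{C}(G)$ automatically includes $S=\varnothing$, giving a value of at least $1$. This matches $\scstar(G)=\max\{1,\scnum(G)\}$, so the two descriptions in \cref{def:scStar} agree with the statement.
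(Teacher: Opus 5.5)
Your proof is correct and is essentially the paper's argument: both rest on the observation that deleting from $S$ a vertex $v$ with $c(S\setminus\{v\})\geq c(S)$ strictly increases $c(S)-\smallAbs{S}$. The paper applies this once to a maximiser, concluding that it must lie in $\mathcal{C}(G)$, whereas you iterate it as a pruning procedure; this is the same idea in slightly longer form.
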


\begin{proof}
	Take any $S\subseteq V(G)$, and suppose that there exists some $v\in S$ such that adding $v$ back to $G\setminus S$ does not reconnect any components of $G\setminus S$. Then
	\begin{equation*}
		c_G(S\setminus\{v\})\geq c_G(S)
	\end{equation*}
	(with equality unless $v$ is isolated), and so
	\begin{equation*}
		c_G(S\setminus\{v\})-\smallAbs{S\setminus\{v\}}=c_G(S\setminus\{v\})-(\smallAbs{S}-1)\geq c_G(S)-\smallAbs{S}+1>c_G(S)-\smallAbs{S}
	\end{equation*}
	This tells us that, if $S$ maximises $c_G(S)-\smallAbs{S}$, then adding any vertex in $S$ back to $G\setminus S$ must reconnect some components of $G\setminus S$, and therefore $S\in\mathcal{C}(G)$.
\end{proof}

This allows us to relate $\hgt_R(\mathcal{J}_G)$ and $\scstar(G)$.
\begin{proposition}\label{prop:scHeight}
	Let $G$ be a graph. Then
	\begin{equation*}
		\hgt_R(\mathcal{J}_G)=\smallAbs{V(G)}-\scstar(G)
	\end{equation*}
\end{proposition}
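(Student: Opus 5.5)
The identity follows by rewriting the height formula of \cref{cor:height} and then applying \cref{prop:scSets}. By \cref{cor:height},
\begin{equation*}
	\hgt_R(\mathcal{J}_G)=\min\{\smallAbs{S}+\smallAbs{V(G)}-c(S):S\in\mathcal{C}(G)\}.
\end{equation*}
Since $\smallAbs{V(G)}$ does not depend on $S$, it can be pulled out of the minimum, and a minimum of a negated quantity is the negative of a maximum. This gives
\begin{equation*}
	\hgt_R(\mathcal{J}_G)=\smallAbs{V(G)}-\max\{c(S)-\smallAbs{S}:S\in\mathcal{C}(G)\}.
\end{equation*}

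By \cref{prop:scSets}, the maximum on the right is exactly $\scstar(G)$, which yields the claim.

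We should check that the two ranges of $S$ agree. Both \cref{cor:height} and \cref{prop:scSets} take $S$ over $\mathcal{C}(G)$, and $\varnothing\in\mathcal{C}(G)$, so both sets are non-empty and the extrema exist. The one substantive ingredient is the reduction from arbitrary $S\subseteq V(G)$ to $S\in\mathcal{C}(G)$, and that is already supplied by \cref{prop:scSets}. The rest is bookkeeping, so no step should present a real obstacle.
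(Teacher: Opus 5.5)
Your proposal is correct and is the paper's proof: you pull $\smallAbs{V(G)}$ out of the minimum in \cref{cor:height}, turn the minimum into a maximum of $c(S)-\smallAbs{S}$ over $\mathcal{C}(G)$, and identify that maximum with $\scstar(G)$ via \cref{prop:scSets}.
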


\begin{proof}
	By \cref{cor:height} and \cref{prop:scSets}, we have
	\begin{equation*}
		\hgt_R(\mathcal{J}_G)=\min\{\smallAbs{V(G)}+\smallAbs{S}-c_G(S):S\in\mathcal{C}(G)\}=\smallAbs{V(G)}-\scstar(G)
	\end{equation*}
	as desired.
\end{proof}

Furthermore, we can relate $\pi$ and $\scstar$ via the following inequality:
\begin{proposition}\label{prop:scPiIneq}
	Let $G$ be a graph. Then
	\begin{equation*}
		\scstar(G)\leq\pi(G)
	\end{equation*}
\end{proposition}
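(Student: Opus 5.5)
We prove that $c_G(S)-\smallAbs{S}\leq\pi(G)$ for every $S\subseteq V(G)$; taking the maximum over all $S$ then gives the inequality, by \cref{def:scStar}. The argument is purely combinatorial and rests on a counting bound for how a path meets a vertex set. Alternatively, one could combine \cref{prop:LFPi} and \cref{prop:scHeight} with the general inequality $\LF(G)\leq\hgt_R(\mathcal{J}_G)$; that inequality follows from \cref{def:KT}-style reasoning, since the initial terms of a maximal regular sequence inside $\mathcal{J}_G$ have length at most the height. We prefer the direct combinatorial route because it is self-contained.

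Fix a minimum path cover, that is, vertex-disjoint paths $Q_1,\ldots,Q_m$ in $G$ with $m=\pi(G)$ whose vertex sets partition $V(G)$. Fix also any $S\subseteq V(G)$. Deleting $S$ from $G$ deletes $S\cap V(Q_i)$ from each path $Q_i$.

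The key observation is how many pieces a single path breaks into. Removing $s_i\defeq\smallAbs{S\cap V(Q_i)}$ vertices from a path leaves at most $s_i+1$ subpaths, and this count includes the case where the path is entirely deleted, which yields $0\leq s_i+1$ pieces. Each surviving subpath is connected in $G\setminus S$, so it lies inside a single component of $G\setminus S$. Every vertex of $G\setminus S$ lies on some surviving subpath, so every component of $G\setminus S$ contains at least one subpath. Hence the components of $G\setminus S$ number at most the subpaths:
\begin{equation*}
	c_G(S)\leq\mySum_{i=1}^{m}(s_i+1)=\smallAbs{S}+\pi(G).
\end{equation*}
Rearranging gives $c_G(S)-\smallAbs{S}\leq\pi(G)$ for every $S$, which is what we needed.

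The only point requiring care is the counting of subpaths when deleted vertices are adjacent on the path or sit at its endpoints. In those cases the number of pieces is strictly smaller than $s_i+1$, so the upper bound still holds. The remaining step, that each component contains at least one subpath, is immediate.
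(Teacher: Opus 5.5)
Your argument is correct and is the paper's proof. The paper likewise fixes a minimum path cover $F$, notes that deleting $S$ leaves at most $\pi(G)+\smallAbs{S}$ pieces, and uses $c_G(S)\leq c_F(S)$; you just make explicit the surjection from surviving subpaths to components of $G\setminus S$, which the paper leaves implicit. Your aside on the algebraic route is loosely justified: what it actually needs is that the edge binomials of a linear forest form a regular sequence in $\mathcal{J}_G$, which gives $\LF(G)\leq\hgt_R(\mathcal{J}_G)$. It plays no role in the proof you give, though.
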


\begin{proof}
	Let $F$ be a path covering of $G$ with $\pi(G)$ connected components, and take any $S\subseteq V(G)$. Since each component is a path, $F\setminus S$ has at most $\pi(G)+\smallAbs{S}$ connected components. Then, since $E(F)\subseteq E(G)$, we have
	\begin{equation*}
		c_G(S)\leq c_F(S)\leq\pi(G)+\smallAbs{S}
	\end{equation*}
	so
	\begin{equation*}
		c_G(S)-\smallAbs{S}\leq\pi(G)
	\end{equation*}
	for all $S\subseteq V(G)$, and the result follows.
\end{proof}

\subsection{The Characterisation}

\mainThm*

\begin{proof}\label{prf:main}
	By \cref{thm:heightLFKT}, \cref{prop:scHeight}, and \cref{prop:LFPi}, we have that $\mathcal{J}_G$ is of K\"{o}nig type if and only if
	\begin{equation*}
		\smallAbs{V(G)}-\scstar(G)=\smallAbs{V(G)}-\pi(G)
	\end{equation*}
	and the result is immediate.
\end{proof}

\begin{note}
	Since we always have $\scstar(G)\leq\pi(G)$ by \cref{prop:scPiIneq}, graphs with binomial edge ideals of K\"{o}nig type can be viewed as those for which this inequality is sharp.
\end{note}

We now have a characterisation of graphs with binomial ideals of K\"{o}nig type in purely graph-theoretic terms, echoing the case of (monomial) edge ideals. This allows us to apply known results from graph theory to the study of binomial ideals of K\"{o}nig type.

\newpage

\section{Classes of Graphs with Binomial Edge Ideals of K\"{o}nig Type}\label{sec:KTClasses}

\cref{thm:main} allows us to establish that several families of graphs have binomial edge ideals of K\"{o}nig type.

\subsection{Cocomparability/Weakly Closed Graphs}

\begin{definition}
	Let $G$ be a graph. An assignment of directions to the edges of $G$ is called an \emph{orientation}. An orientation is said to be \emph{transitive} if, whenever directed edges $(u,v)$ and $(v,w)$ exist, the directed edge $(u,w)$ exists also.

	If a transitive orientation of $G$ exists, we say that $G$ is a \emph{comparability} graph.
\end{definition}

\begin{definition}\label{def:cocomp}
	We say that a graph $G$ is a \emph{cocomparability} graph if its complement $\overline{G}$ is comparability. 
\end{definition}

\begin{proposition}\thmCite[Theorem 8]{deogun1ToughCocomparabilityGraphs1997}\label{prop:cocomEq}
	For any cocomparability graph $G$, we have
	\begin{equation*}
		\pi(G)=\max\{1,\scnum(G)\}
	\end{equation*}
\end{proposition}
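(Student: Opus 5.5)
Since \cref{prop:scPiIneq} already gives $\scstar(G)\leq\pi(G)$ for every graph, and $\scstar(G)=\max\{1,\scnum(G)\}$ by \cref{def:scStar}, the plan is to prove only the reverse inequality $\pi(G)\leq\scstar(G)$ for cocomparability $G$. I would do this constructively. I will produce a path cover with $m$ paths together with a set $S\subseteq V(G)$ satisfying $c_G(S)-\smallAbs{S}\geq m$, which forces equality. The argument is componentwise: both $\pi$ and $c_G(S)-\smallAbs{S}$ are additive over components, once $S$ is chosen componentwise. So I reduce to $G$ connected.

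The structural input is the standard characterisation of cocomparability graphs by a vertex ordering. Let $P$ be a transitive orientation of $\overline{G}$, viewed as a poset, and take a linear extension $v_1<\cdots<v_n$. This ordering is \emph{umbrella-free}: if $i<j<k$ and $v_iv_k\in E(G)$, then $v_j$ is adjacent to $v_i$ or to $v_k$. Otherwise $v_i<_Pv_j<_Pv_k$ would give $v_i<_Pv_k$ by transitivity, contradicting the edge. With this ordering I would run a greedy path-cover procedure in the spirit of the Deogun--Steiner / Corneil et al.\ algorithms. Scan the vertices in order. Maintain a set of vertex-disjoint paths, each with a designated right endpoint. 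Attach each new vertex $v_j$ to the end of a path whose endpoint is adjacent to $v_j$. When several are available, choose the one whose endpoint is leftmost, i.e.\ earliest in the ordering. Start a new path only when no endpoint is adjacent to $v_j$. By umbrella-freeness, a current endpoint non-adjacent to $v_j$ lies $P$-below $v_j$, together with everything to its right on its path that is also non-adjacent to $v_j$. This is the comparability structure I will exploit.

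The main obstacle is extracting the certificate $S$ when the greedy procedure ends with $m\geq2$ paths. My first attempt would be an inductive, exchange-style argument. Each time a new path is forced to start at $v_j$, record the endpoints present at that moment; they are all $P$-below $v_j$. Then let $S$ be the set of vertices that were ``blocking'', namely those used as attachment points in a way that prevented merging. I would aim to show that deleting $S$ leaves at least $m+\smallAbs{S}$ components, because the segments of the paths between consecutive elements of $S$ are pairwise separated by comparabilities in $P$, i.e.\ non-edges in $G$. If the direct bookkeeping gets unwieldy, the fallback is to argue by minimal counterexample. Take $G$ cocomparability with $\pi(G)>\scstar(G)$ and $\smallAbs{V(G)}$ minimal. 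Delete the last vertex $v_n$; this keeps the graph cocomparability, since it is an induced subgraph. Apply the induction hypothesis to obtain a path cover and a set $S'$ certifying optimality. Then show that either $v_n$ extends some path, using umbrella-freeness to reroute a path ending near $v_n$, or $S'$, possibly enlarged by a neighbour of $v_n$, still certifies optimality for $G$. Finally, in the connected case with $m=1$, the bound $\pi(G)\leq\scstar(G)$ is trivial because $\scstar(G)\geq1$. This is exactly why the $\max\{1,\cdot\}$ appears.
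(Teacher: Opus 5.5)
The paper does not prove this statement; it imports it from \cite[Theorem 8]{deogun1ToughCocomparabilityGraphs1997}. Your preliminary reductions are fine: by \cref{prop:scPiIneq} only $\pi(G)\leq\scstar(G)$ is needed, both sides are additive over components, and a linear extension of a transitive orientation of $\overline{G}$ is umbrella-free. The gap is in the central step. Your greedy procedure, run on an \emph{arbitrary} such linear extension, does not in general produce a minimum path cover, so the certificate $S$ you want to extract from its output need not exist. Already $G=P_3$ with ends $a,c$ and centre $b$ shows this. Here $\overline{G}$ has the single edge $\{a,c\}$, and $a,c,b$ is a linear extension. Your procedure starts a path at $a$, is forced to start a second one at $c$, and then appends $b$ to one of them, so it ends with $m=2$. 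But $\pi(P_3)=\scstar(P_3)=1$, so no $S$ with $c_G(S)-\smallAbs{S}\geq2$ exists. Since you only ever append to right endpoints, no tie-breaking rule repairs this; the ordering itself must be chosen carefully. This is why algorithmic approaches to path covers of cocomparability graphs use specially constructed linear extensions rather than arbitrary ones, for example the bump-number algorithm exploited in \cite{damaschkeFindingHamiltonianPaths1992}, or the LDFS-based orderings of Corneil, Dalton and Habib. Even then, building the separating set needs a genuine argument. Your set of ``blocking'' vertices is never defined, and the claim that the path segments between its elements are pairwise separated is precisely the content of the theorem.

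The fallback induction does not close the gap either. The step ``$S'$, possibly enlarged by a neighbour of $v_n$, still certifies'' fails for an arbitrary certificate $S'$ of $G\setminus\{v_n\}$. Take the ordering $1,2,3,v,4,5$ with edges $\{1,2\},\{2,3\},\{3,4\},\{4,5\},\{2,v\},\{v,4\}$. This ordering is umbrella-free, so $G$ is cocomparability with $v_n=5$. The graph $G\setminus\{5\}$ is traceable (via $1,2,3,4,v$), so $S'=\varnothing$ is a valid certificate for it. However $\pi(G)=2$. The leaves $1$ and $5$ would have to be the ends of a Hamiltonian path, which forces $3$ and $v$ to be consecutive on it, and they are non-adjacent. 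Yet $c_G(\varnothing)=1$ and $c_G(\{4\})-1=1$. The unique set attaining $2$ is $\{2,4\}$, which also adds a non-neighbour of $v_n$. So the induction needs a lemma controlling which certificate is carried along and how it changes, and that lemma is where the real difficulty lies. As written, neither route establishes $\pi(G)\leq\scstar(G)$.
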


\begin{note}
	Deogun, Kratsch and Steiner attribute an earlier proof of \cref{prop:cocomEq} to an unpublished  manuscript \cite{lehelPathPartitionCocomparability1991} of Lehel.
\end{note}

\begin{theorem}\label{thm:cocomKT}
	For any cocomparability graph $G$, $\mathcal{J}_G$ is of K\"{o}nig type.
\end{theorem}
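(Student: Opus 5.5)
The plan is to combine \cref{prop:cocomEq} with \cref{thm:main} directly. Let $G$ be a cocomparability graph. \cref{prop:cocomEq}, due to Deogun, Kratsch and Steiner, gives
\begin{equation*}
	\pi(G)=\max\{1,\scnum(G)\}.
\end{equation*}
The right-hand side is exactly $\scstar(G)$, by \cref{def:scStar}. So $\pi(G)=\scstar(G)$, and \cref{thm:main} then tells us that $\mathcal{J}_G$ is of K\"{o}nig type.

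The one point to check is that the conventions of the cited graph-theoretic result match ours. Both $\pi$ and $\scnum$ were defined here following \cite{deogun1ToughCocomparabilityGraphs1997}, and in particular $\scnum$ carries the same restriction $c_G(S)\neq1$. One should also check that the cited theorem applies to disconnected cocomparability graphs, not only connected ones. If it only covers connected graphs, I would reduce to the connected case: K\"{o}nig type can be checked componentwise by the corollary in \cref{sec:KT}, and each connected component of a cocomparability graph is again cocomparability, since induced subgraphs of cocomparability graphs are cocomparability (restricting a transitive orientation of the complement to an induced subgraph keeps it transitive). Applying the argument above to each component then proves the theorem.

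There is no real obstacle here: the main content sits in \cref{prop:cocomEq}. The only step that needs care is the bookkeeping above, namely matching conventions and, if necessary, reducing to connected components.
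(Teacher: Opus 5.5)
Your proof is correct and is essentially the paper's argument: the paper also just combines \cref{prop:cocomEq} with \cref{thm:main}, noting that $\max\{1,\scnum(G)\}=\scstar(G)$. Your fallback reduction to connected components (cocomparability is hereditary, and K\"{o}nig type can be checked componentwise) is valid but not needed, since the paper applies the cited result directly.
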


\begin{proof}
	This follows immediately from \cref{thm:main} and \cref{prop:cocomEq} (using our $\scstar$ notation).
\end{proof}

Since all cographs are permutation graphs, and both permutation and interval graphs are subclasses of cocomparability graphs (see \cite{brandstadtGraphClassesSurvey1999}), this proves the three conjectures of \cite[Section 2.2]{williamsLFCoversBinomialEdge2023}.

This class also includes complete bipartite and trivially perfect graphs (again, see \cite{brandstadtGraphClassesSurvey1999}), and so generalises \cite[Proposition 2.21]{williamsLFCoversBinomialEdge2023} and \cite[Proposition 2.23]{williamsLFCoversBinomialEdge2023}.

Within the field of binomial edge ideals, cocomparability graphs are better known by terminology introduced by Matsuda:
\begin{definition}\thmCite[Definition 2.1]{matsudaWeaklyClosedGraphs2018}\label{def:WC}
	Let $G$ be a graph on $n$ vertices. We say that $G$ is \emph{weakly closed} if there exists a labelling of the vertices of $G$ with the following property: for all integers $1\leq i<j<k\leq n$, if $\{i,k\}\in E(G)$, then $\{i,j\}\in E(G)$ or $\{j,k\}\in E(G)$.
\end{definition}

\begin{note}
	This concept generalises the \textit{closed} graphs of \cite[p.~319]{herzogBinomialEdgeIdeals2010}.
\end{note}

\begin{theorem}\thmCite[Theorem 2.9]{matsudaWeaklyClosedGraphs2018}\label{thm:WCCocom}
	Let $G$ be a graph. Then $G$ is weakly closed if and only if $G$ is cocomparability.
\end{theorem}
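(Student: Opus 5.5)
The plan is to translate the weakly closed condition directly into a statement about the complement $\overline{G}$, where it becomes exactly transitivity of an orientation compatible with a linear order. Both directions then come from the dictionary ``labelling of $V(G)$'' $\leftrightarrow$ ``acyclic orientation of $\overline{G}$''. Throughout, label $V(G)=\{1,\ldots,\smallAbs{V(G)}\}$ as in the standing conventions.

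\textbf{Step 1: rephrase \cref{def:WC} in terms of $\overline{G}$.} The condition in \cref{def:WC} says that for $i<j<k$, if $\{i,k\}\in E(G)$ then $\{i,j\}\in E(G)$ or $\{j,k\}\in E(G)$. Its contrapositive reads: if $\{i,j\}\in E(\overline{G})$ and $\{j,k\}\in E(\overline{G})$ with $i<j<k$, then $\{i,k\}\in E(\overline{G})$. So a labelling witnesses weak closedness precisely when the orientation of $\overline{G}$ sending each edge from its smaller to its larger endpoint is transitive. Transitivity only needs checking on directed paths $i\to j\to k$, and under this orientation these are exactly the triples $i<j<k$.

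\textbf{Step 2: weakly closed $\Rightarrow$ cocomparability.} Given a weakly closed labelling, orient every edge of $\overline{G}$ from the smaller label to the larger. By Step 1 this orientation is transitive, so $\overline{G}$ is comparability. Hence $G$ is cocomparability by \cref{def:cocomp}.

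\textbf{Step 3: cocomparability $\Rightarrow$ weakly closed.} Take a transitive orientation of $\overline{G}$. First I show it is acyclic. Suppose there were a directed cycle $u_1\to u_2\to\cdots\to u_m\to u_1$. Repeated transitivity yields the arc $u_1\to u_m$, which together with the existing arc $u_m\to u_1$ assigns both directions to one edge. That is impossible, since an orientation gives each edge a single direction. Being acyclic, the orientation admits a topological ordering, and I relabel the vertices $1,\ldots,\smallAbs{V(G)}$ along it, so that every arc goes from a smaller to a larger label. Now take $i<j<k$ with $\{i,j\},\{j,k\}\notin E(G)$. These are edges of $\overline{G}$, and by the choice of labelling they are oriented $i\to j$ and $j\to k$. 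Transitivity gives $i\to k$, so $\{i,k\}\notin E(G)$. This is the contrapositive form from Step 1, so the labelling is weakly closed.

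The only step requiring genuine care is the acyclicity claim in Step 3, which is what allows a linear labelling to be extracted from a transitive orientation. Everything else is the contrapositive bookkeeping of Step 1.
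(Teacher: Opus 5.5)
Your proof is correct. The paper itself does not prove this statement. It quotes it from \cite[Theorem 2.9]{matsudaWeaklyClosedGraphs2018} and remarks that it was already known to Kratsch and Stewart \cite{kratschDominationCocomparabilityGraphs1993}, so your argument supplies what the paper takes on citation.

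What you give is the standard direct argument behind the ``cocomparability ordering'' characterisation of cocomparability graphs. In one direction, a labelling is weakly closed exactly when orienting each edge of $\overline{G}$ towards its larger label gives a transitive orientation; this is your Step 1 contrapositive. In the other direction, a transitive orientation of $\overline{G}$ is the strict order of a poset, so any linear extension of it (your topological sort) is a weakly closed labelling.

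Your acyclicity step is sound. The cycle's vertices are distinct, so each application of transitivity produces a genuine arc, and ending with both $u_1\to u_m$ and $u_m\to u_1$ contradicts the single direction assigned to $\{u_1,u_m\}$. You could shorten it by noting that a transitive orientation is irreflexive and transitive, hence already a strict partial order.

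The trade-off is that your route is self-contained and elementary, while the paper's citation costs nothing but leaves the equivalence as a black box.
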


\begin{note}
	Corneil, Olariu and Stewart remark in \cite[p.~223]{corneilAsteroidalTriplefreeGraphs1994} that \cref{thm:WCCocom} was already known to Kratsch and Stewart in \cite{kratschDominationCocomparabilityGraphs1993}. 
\end{note}

In \cite[Question 10.5]{laclairFpurityBinomialEdge2026}, LaClair and McCullough raised the question of whether binomial edge ideals of weakly closed graphs are of K\"{o}nig type under the additional assumption of unmixedness. We can now answer this in the affirmative, and in fact without requiring the unmixedness hypothesis.
\wcKTCor*

\begin{proof}\label{prf:WCKT}
	This follows immediately from \cref{thm:cocomKT} and \cref{thm:WCCocom}.
\end{proof}

\subsection{\texorpdfstring{\protect $P_4$-Extendible Graphs}{P₄-Extendible Graphs}}\label{OtherKTClasses}

\begin{definition}
	We say that $G$ is \emph{$P_4$-extendible} if, for every induced subgraph $H$ of $G$ isomorphic to $P_4$, there is at most one vertex in $V(G)\setminus V(H)$ which belongs to an induced $P_4$ with three vertices of $H$.
\end{definition}

\begin{proposition}
	For any $P_4$-extendible graph $G$, $\mathcal{J}_G$ is of K\"{o}nig type.
\end{proposition}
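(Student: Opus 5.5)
The argument will mirror the proof of \cref{thm:cocomKT}: establish $\pi(G)=\scstar(G)$ for every $P_4$-extendible graph $G$ and then invoke \cref{thm:main}. The equality itself should already be known. In \cite{giakoumakisScatteringNumberModular1997} Giakoumakis studies the scattering number and path covering number (denoted $s$ and $\rho$ there, as the notes after \cref{def:pi} and \cref{def:sc} point out) for graphs with few $P_4$'s. I expect that paper to contain a statement of the form $\rho(G)=\max\{1,s(G)\}$ for $P_4$-extendible graphs, probably proved through the modular decomposition of such graphs. So the first step is to locate that result and translate its notation into ours, so that it reads $\pi(G)=\max\{1,\scnum(G)\}=\scstar(G)$ by \cref{def:scStar}.

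One point needs checking in that translation: whether the cited equality holds for all graphs or only for connected ones. If it is stated only for connected $G$, I will reduce to that case. By the corollary in \cref{sec:KT}, K\"{o}nig type can be checked componentwise. Moreover, every connected component of a $P_4$-extendible graph is again $P_4$-extendible, since each induced $P_4$ lies inside a single component and the extension condition only involves vertices adjacent to it. Hence it suffices to apply \cref{thm:main} to each component.

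The only real obstacle is that I cannot confirm the exact form of the cited theorem. If the literature gives only the inequality $\pi(G)\le\scstar(G)$, or gives the equality only for a subclass, I would argue directly. Since \cref{prop:scPiIneq} already gives $\scstar(G)\le\pi(G)$, only the reverse inequality is needed, and I would prove it by induction on the modular decomposition. For a disjoint union, both $\pi$ and $\scstar$ are additive; the latter follows from \cref{prop:scHeight} and \cref{cor:addHeight}. For a join $G_1\vee G_2$, one concatenates paths of $G_1$ and $G_2$ across the join edges and compares the result with separators that contain all of one side. The remaining prime-module case is, by the structure theory of $P_4$-extendible graphs, a small graph on at most about 8 vertices (a $P_4$, $C_5$-like, or extension configuration) with substituted modules, and I would verify it case by case.
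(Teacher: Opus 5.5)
Your strategy is the paper's. Its proof is the one-line combination of \cref{thm:main} with the known equality $\pi(G)=\max\{1,\scnum(G)\}$ for $P_4$-extendible graphs.

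The difference is the source. The paper takes the equality from \cite[Theorem 2]{hochstattlerHamiltonicityGraphsFewP1995} (Hochst\"{a}ttler--Tinhofer). The result of \cite{giakoumakisScatteringNumberModular1997} that the paper uses, \cite[Section 4, Theorem 7]{giakoumakisScatteringNumberModular1997}, is about $3$-sun-free semi-$P_4$-sparse graphs. That class does not contain all $P_4$-extendible graphs: $P_5$ is $P_4$-extendible but is forbidden there. With the correct citation nothing more is needed, and your componentwise reduction is harmless but redundant. Your fallback cannot replace the citation. The union and join steps are routine, but the non-cograph (prime) case is the entire content of the cited theorem, and you only announce it as a case check.

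One caution applies equally to the paper's proof. The cited equality, and the Jamison--Olariu structure theory you invoke, must refer to the standard notion of $P_4$-extendibility. There a vertex outside $H$ counts if it lies on an induced $P_4$ sharing at least one vertex with $H$. The definition printed in the paper only counts vertices forming an induced $P_4$ with three vertices of $H$, which is the partner condition defining $P_4$-tidy graphs. Under that definition the statement is false.

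Take the net $N$ of \cref{example:net}, with triangle $a,b,c$ and pendant vertices $a',b',c'$ attached to $a,b,c$ respectively.
\begin{itemize}
\item Its induced $P_4$s are $a'abb'$, $a'acc'$ and $b'bcc'$.
\item For $H=a'abb'$, any four-set made of $c$ or $c'$ together with three vertices of $H$ contains the triangle or an isolated vertex. So no vertex qualifies, and by symmetry $N$ satisfies the printed definition.
\item Yet $\pi(N)=2$, since three pendant vertices cannot all be endpoints of one path, while $\scstar(N)=1$. Hence $\mathcal{J}_N$ is not of K\"{o}nig type.
\end{itemize}
So state and use the standard definition explicitly. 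Otherwise your prime-case verification would fail exactly at the net.
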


\begin{proof}
	This follows immediately from \cref{thm:main} and \cite[Theorem 2]{hochstattlerHamiltonicityGraphsFewP1995}.
\end{proof}

\pagebreak

\subsection{\texorpdfstring{\protect $3$-Sun-Free Semi-$P_4$-Sparse Graphs}{3-Sun-Free Semi-P₄-Sparse Graphs}}

\begin{definition}
	We say that a graph is \emph{$3$-sun-free semi-$P_4$-sparse} if it contains no induced $P_5$, $\overline{P_5}$ (otherwise known as the house), kite, or $3$-sun (otherwise known as the net).
\end{definition}

\begin{note}
	We use the term \textit{$3$-sun} in agreement with \cite[p.~323]{giakoumakisScatteringNumberModular1997}, but this construction is usually referred to instead as the \textit{$3$-sunlet}.
\end{note}

\begin{center}
	\begin{tabular}{@{}c@{\hspace{1.5cm}}c@{\hspace{1.5cm}}c@{\hspace{1.5cm}}c@{}}
		\begin{tikzpicture}[mygraph,x=0.7cm,y=0.7cm,baseline={([yshift=-0.5ex]current bounding box.center)}]
       		\node(1) at (0,0) {};
    		\node(2) at ([shift=(0:0.8)]1) {};
        	\node(3) at ([shift=(0:0.8)]2) {};
			\node(4) at ([shift=(0:0.8)]3) {};
			\node(5) at ([shift=(0:0.8)]4) {};
	  	  	\foreach \from/\to in {1/2,2/3,3/4,4/5}
				\draw[-] (\from) -- (\to);
    	\end{tikzpicture}
		&
		\begin{tikzpicture}[mygraph,x=0.7cm,y=0.7cm,baseline={([yshift=-0.5ex]current bounding box.center)}]
       		\node(1) at (0,0) {};
    		\node(2) at ([shift=(90:1)]1) {};
        	\node(3) at ([shift=(0:1)]1) {};
			\node(4) at ([shift=(0:1)]2) {};
			\node(5) at ([shift=(60:1)]2) {};
	  	  	\foreach \from/\to in {1/2,1/3,2/4,2/5,3/4,4/5}
				\draw[-] (\from) -- (\to);
    	\end{tikzpicture}
		&
		\begin{tikzpicture}[mygraph,x=0.7cm,y=0.7cm,baseline={([yshift=-0.5ex]current bounding box.center)}]
       		\node(1) at (0,0) {};
    		\node(2) at ([shift=(135:1)]1) {};
        	\node(3) at ([shift=(45:1)]1) {};
			\node(4) at ([shift=(45:1)]2) {};
			\node(5) at ([shift=(0:1)]3) {};
	  	  	\foreach \from/\to in {1/2,1/3,1/4,2/4,3/4,3/5}
				\draw[-] (\from) -- (\to);
    	\end{tikzpicture}
		&
		\begin{tikzpicture}[mygraph,x=0.7cm,y=0.7cm,baseline={([yshift=-0.5ex]current bounding box.center)}]
       		\node(1) at (0,0) {};
    		\node(2) at ([shift=(-120:1)]1) {};
        	\node(3) at ([shift=(0:1)]2) {};
			\node(4) at ([shift=(90:1)]1) {};
			\node(5) at ([shift=(-150:1)]2) {};
        	\node(6) at ([shift=(-30:1)]3) {};
	  	  	\foreach \from/\to in {1/2,1/3,1/4,2/3,2/5,3/6}
				\draw[-] (\from) -- (\to);
    	\end{tikzpicture}
		\\[1cm]
		\small $P_5$
		&
		\small $\overline{P_5}$/House
		&
		\small Kite
		&
		\small $3$-Sun/Net
	\end{tabular}
\end{center}

This class evolved to generalise the work of \cite{hochstattlerHamiltonicityGraphsFewP1995} on $P_4$-sparse graphs (see \cite[Definition 3]{hochstattlerHamiltonicityGraphsFewP1995}), which in turn generalised the work of \cite{jungClassPosetsCorresponding1978} on cographs (there called $D^\ast$-graphs, using older terminology).

\begin{proposition}
	For any $3$-sun-free semi-$P_4$-sparse graph $G$, $\mathcal{J}_G$ is of K\"{o}nig type.
\end{proposition}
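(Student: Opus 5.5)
The plan follows the same pattern as for $P_4$-extendible graphs. By \cref{thm:main}, it suffices to show that every $3$-sun-free semi-$P_4$-sparse graph $G$ satisfies
\begin{equation*}
	\pi(G)=\max\{1,\scnum(G)\}=\scstar(G).
\end{equation*}
This is a purely graph-theoretic statement. The natural source is the paper of Giakoumakis on the scattering number of such graphs (\cite{giakoumakisScatteringNumberModular1997}). That paper, which extends \cite{hochstattlerHamiltonicityGraphsFewP1995} and \cite{jungClassPosetsCorresponding1978}, proves that for this class the path covering number $\rho(G)$ is determined by the scattering number $s(G)$.

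The first step is to translate notation. Giakoumakis writes $\rho$ for our $\pi$ and $s$ for our $\scnum$, as recorded in the notes after \cref{def:pi} and \cref{def:sc}. His theorem should be stated in the form: for such $G$, $\rho(G)=s(G)$ whenever $s(G)\geq1$, and $G$ has a Hamiltonian path otherwise. I will check that his $s$ carries the same restriction $c_G(S)\neq1$ as \cref{def:sc}. If so, combining the two cases gives exactly $\pi(G)=\max\{1,\scnum(G)\}$.

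Next I will handle disconnected graphs. \cref{thm:main} and the K\"{o}nig type property are both checked componentwise (by the corollary following \cref{thm:heightLFKT}), and the forbidden-induced-subgraph class is closed under taking components. So if the cited result is stated only for connected graphs, it suffices to apply it to each component.

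The main obstacle is bookkeeping: making sure the cited theorem really gives equality in exactly our normalisation, including the traceable boundary case where $\scnum(G)\leq 0$ but $\pi(G)=1$. If the cited result only gives a Hamiltonicity criterion, for instance when $\scnum(G)\leq0$, I would instead derive $\pi(G)\leq\scstar(G)$ directly and appeal to \cref{prop:scPiIneq} for the reverse inequality. The standard trick for this is to add $\scstar(G)-1$ universal vertices to $G$; a Hamiltonian path in the enlarged graph then splits into $\scstar(G)$ paths in $G$. This would also require checking that the enlarged graph stays in the class and has scattering number at most $1$.
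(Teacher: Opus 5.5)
Your proposal is correct and matches the paper's proof. The paper combines \cref{thm:main} with \cite[Section 4, Theorem 7]{giakoumakisScatteringNumberModular1997}, which shows that $3$-sun-free semi-$P_4$-sparse graphs are Jung graphs and hence satisfy $\pi(G)=\max\{1,\scnum(G)\}=\scstar(G)$ directly in the paper's normalisation; your componentwise reduction and universal-vertex fallback are therefore not needed, though they do no harm.
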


\begin{proof}
	This follows immediately from \cref{thm:main} and \cite[Section 4, Theorem 7]{giakoumakisScatteringNumberModular1997}.
\end{proof}

\subsection{Jung Graphs}

\cite{giakoumakisScatteringNumberModular1997} introduces a class of graphs which, by definition, satisfy the equality of \cref{thm:main}:

\begin{definition}\thmCite[Section 2, Definition 1]{giakoumakisScatteringNumberModular1997}
	We say that a graph $G$ is a \emph{Jung graph} if it satisfies the following conditions:
	\begin{enumerate}
		\item $\pi(G)=\max\{1,\scnum(G)\}$ (which is $\scstar(G)$ in our notation).
		\item If $\scnum(G)=0$, then $G$ is Hamiltonian.
		\item If $\scnum(G)<0$, then $G$ is Hamiltonian-connected.
	\end{enumerate}
\end{definition}
Therefore, by \cref{thm:main}, all Jung graphs have binomial edge ideals of K\"{o}nig type.

\cite[Section 4, Theorem 7]{giakoumakisScatteringNumberModular1997} in fact shows that $3$-sun-free semi-$P_4$-sparse graphs are Jung graphs.

There does not appear to be much literature on Jung graphs, but \cite{giakoumakisP_4tidyGraphs1997} and \cite{liSpanningConnectednessHamiltonian2015} demonstrate some families for which these conditions hold.

\subsection{Previously Known Classes}

\subsubsection{Traceable Graphs}

It is well known that traceable graphs have binomial edge ideals of K\"{o}nig type (see \cref{example:traceable}). \cref{thm:main} offers a short alternative proof:

\begin{proposition}\label{prop:traceableKT}
	For any traceable graph $G$, $\mathcal{J}_G$ is of K\"{o}nig type.
\end{proposition}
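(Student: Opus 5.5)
By \cref{thm:main}, it suffices to show that $\pi(G)=\scstar(G)$ whenever $G$ is traceable. The plan is to squeeze $\scstar(G)$ between $1$ and $\pi(G)$, and then use traceability to show that $\pi(G)=1$.

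The first step is the lower bound. By \cref{def:scStar}, $\scstar(G)=\max\{1,\scnum(G)\}\geq 1$. Equivalently, taking $S=\varnothing$ gives $c_G(\varnothing)-\smallAbs{\varnothing}\geq 1$.

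The second step is the upper bound. Since $G$ is traceable, it contains a Hamiltonian path, which is a single path covering every vertex, so $\pi(G)=1$. Combining this with \cref{prop:scPiIneq} gives
\begin{equation*}
	1\leq\scstar(G)\leq\pi(G)=1,
\end{equation*}
so $\scstar(G)=\pi(G)=1$, and \cref{thm:main} shows that $\mathcal{J}_G$ is of K\"{o}nig type.

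There is no real obstacle here. The only point that needs care is the lower bound $\scstar(G)\geq1$, which depends on using the unrestricted version $\scstar$ rather than $\scnum$: for a traceable graph such as $K_n$, the value $\scnum(G)$ can be negative. The same argument extends to graphs all of whose components are traceable, since we can apply the result componentwise using the corollary that K\"{o}nig type can be checked on connected components.
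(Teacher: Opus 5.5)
Your proof is correct and is essentially the paper's argument: $\pi(G)=1$ from the Hamiltonian path, the squeeze $1\leq\scstar(G)\leq\pi(G)=1$ via \cref{prop:scPiIneq}, then \cref{thm:main}. Your closing remark about extending to graphs whose components are all traceable is also correct, since K\"{o}nig type can be checked componentwise.
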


\begin{proof}
	Since $G$ is traceable, we know $\pi(G)=1$, so by \cref{prop:scPiIneq} we have
	\begin{equation*}
		1\leq\scstar(G)\leq\pi(G)=1
	\end{equation*}
	and we are done by \cref{thm:main}.
\end{proof}

\subsubsection{Forests}

In \cite[Theorem 5.9]{laclairInvariantsBinomialEdge2025}, LaClair proved that forests have binomial edge ideals of K\"{o}nig type via linear programs. The author gave an alternative proof in \cite[Theorem 2.33]{williamsLFCoversBinomialEdge2023}. A result of Jung yields another proof:

\begin{proposition}\label{prop:forestKT}
	For any forest $F$, $\mathcal{J}_F$ is of K\"{o}nig type.
\end{proposition}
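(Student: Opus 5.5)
By \cref{thm:main} and \cref{prop:scPiIneq}, it suffices to show that $\pi(F)\leq\scstar(F)$ for every forest $F$. Concretely, I would exhibit a set $S\subseteq V(F)$ with $c_F(S)-\smallAbs{S}\geq\pi(F)$. This is the content of Jung's theorem that forests, indeed cographs and more, satisfy $\pi=\max\{1,\scnum\}$. I would sketch a self-contained induction rather than rely on the citation, but would cite Jung if one is available.

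First reduce to trees. Both sides are additive over components: a minimal path cover of $F_1\sqcup F_2$ is the union of minimal path covers of the pieces, and we may take $S=S_1\cup S_2$. Additivity of $\scstar$ in this form holds because $\scstar(G)=\max_S(c_G(S)-\smallAbs{S})$ with no restriction on $S$, which is exactly why the unrestricted variant is the natural one here. So let $T$ be a tree. I would then prove by induction on $\smallAbs{V(T)}$ the stronger statement that there is a set $S$ with $c_T(S)-\smallAbs{S}=\pi(T)$.

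For the inductive step, the natural greedy structure is the following. If $T$ is a path, take $S=\varnothing$. Otherwise choose a vertex $v$ of degree at least $3$ such that at least two branches at $v$ are paths. Such a vertex exists: root the tree and take a deepest vertex of degree at least $3$, so all of its descending branches are pendant paths. Suppose $v$ has at least two pendant path branches $Q_1,Q_2$. Then an optimal path cover may be assumed to use the path $Q_1vQ_2$. Removing $v$ splits $T$ into $Q_1$, $Q_2$, and the remaining branches $T_1,\ldots,T_r$ with $r\geq1$, and I would show that
\begin{equation*}
\pi(T)=1+\mySum_i\pi(T_i).
\end{equation*}
The upper bound is clear. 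For the lower bound, any path cover of $T$ restricted to $T\setminus\{v\}$ gives at most one path through $v$, and I would argue by an exchange argument that we lose nothing. By induction, pick $S_i$ for each $T_i$ and set $S=\{v\}\cup\bigcup_i S_i$. Then
\begin{equation*}
c_T(S)-\smallAbs{S}=2+\mySum_i c_{T_i}(S_i)-1-\mySum_i\smallAbs{S_i}=1+\mySum_i\pi(T_i)=\pi(T).
\end{equation*}

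The main obstacle is the exchange argument establishing $\pi(T)\geq1+\sum_i\pi(T_i)$. A path through $v$ might enter some $T_i$, so that $v$ does not join $Q_1$ and $Q_2$. To handle this, I would note that the path containing $v$ uses at most two of the branches at $v$, so at least one of $Q_1,Q_2$, say $Q_1$, is covered by separate paths lying entirely within it. Then in any cover, the number of paths is at least (paths meeting $\bigcup T_i$ after deleting $v$) $+\,1$, because deleting $v$ from its path yields at most two pieces, and the paths inside $Q_1$ or $Q_2$ account for at least one extra path. Keeping this count careful, via cases according to how many branches the $v$-path uses, is where the care is needed. If this becomes messy, I would fall back on citing Jung's result \cite{jungClassPosetsCorresponding1978} that $\pi=\max\{1,\scnum\}$ for forests and conclude immediately from \cref{thm:main}.
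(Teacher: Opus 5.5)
Your argument is correct, but it takes a genuinely different route from the paper. The paper's proof is exactly your fallback: \cref{thm:main} combined with Jung's \cite[Theorem 4.1]{jungClassPosetsCorresponding1978}. You instead reprove the needed inequality $\scstar(T)\geq\pi(T)$ for trees by peeling off a vertex $v$ with two pendant paths $Q_1,Q_2$. That induction works: the deepest-vertex argument produces $v$, and $S=\{v\}\cup\bigcup_i S_i$ gives $c_T(S)-\smallAbs{S}=1+\sum_i(c_{T_i}(S_i)-\smallAbs{S_i})$.

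The step you flag as the main obstacle, $\pi(T)\geq1+\sum_i\pi(T_i)$, is actually unnecessary. Since \cref{prop:scPiIneq} already gives $\scstar\leq\pi$, you only need to carry the one-sided statement ``some $S$ has $c_T(S)-\smallAbs{S}\geq\pi(T)$'' through the induction. The inductive step then uses only the trivial bound $\pi(T)\leq1+\sum_i\pi(T_i)$, witnessed by the cover consisting of $Q_1vQ_2$ together with optimal covers of the $T_i$. If you do want the lower bound, it needs no exchange argument. Suppose the path through $v$ in a cover $\mathcal{P}$ enters $k\leq2$ branches. Deleting $v$ leaves a path cover of $T\setminus\{v\}$ with $\smallAbs{\mathcal{P}}-1+k$ paths, and this number is at least $2+\sum_i\pi(T_i)$ because $Q_1$ and $Q_2$ each need a path of their own. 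Hence $\smallAbs{\mathcal{P}}\geq3-k+\sum_i\pi(T_i)\geq1+\sum_i\pi(T_i)$.

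What your route buys is self-containment and constructiveness. The recursion builds a path cover and a set $S$ with equal counts, and these certify each other's optimality via \cref{prop:scPiIneq}. Such an $S$ maximises $c(S)-\smallAbs{S}$, so it lies in $\mathcal{C}(F)$ by the proof of \cref{prop:scSets}. Thus $P_S(F)$ is an explicit minimal prime of minimum height. The recursion also computes $\LF(F)$ in polynomial time, much like \cite[Algorithm 2.34]{williamsLFCoversBinomialEdge2023}. The paper's route is a one-liner that leaves all the combinatorics to Jung.

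One minor point: forests are not cographs in general ($P_4$ is a tree but not a cograph). So ``forests, indeed cographs and more'' should not suggest that one class contains the other.
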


\begin{proof}
	This follows immediately from \cref{thm:main} and \cite[Theorem 4.1]{jungClassPosetsCorresponding1978}.
\end{proof}

\newpage

\section{Unmixed Binomial Edge Ideals of K\"{o}nig Type}\label{sec:umKT}

We will now present an alternative classification of unmixed binomial edge ideals of K\"{o}nig type to that of Herzog, Hibi and Moradi (\cref{prop:HHMChar}). This builds on the following result of LaClair and McCullough:
\begin{theorem}\thmCite[Corollary 10.4]{laclairFpurityBinomialEdge2026}\label{thm:LM}
	Let $G$ be a graph, and suppose that $\mathcal{J}_G$ is unmixed of K\"{o}nig type. Then $G$ is weakly closed.
\end{theorem}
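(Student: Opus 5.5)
By \cref{cor:addHeight} and the componentwise nature of K\"{o}nig type, I would first reduce to the case where $G$ is connected. A disjoint union of weakly closed graphs is weakly closed: concatenate the labellings, and no edge then joins two different blocks. Combining \cref{prop:HHMChar} with \cref{prop:scHeight} and \cref{thm:main}, the hypotheses then become two purely combinatorial conditions. First, $G$ is traceable, so $\pi(G)=\scstar(G)=1$. Second, unmixedness together with \cref{lem:height} gives $c(S)=\smallAbs{S}+1$ for every $S\in\mathcal{C}(G)$. It is this second condition that has to do the real work.

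The plan is to build the weakly closed labelling from a Hamiltonian path $v_1\to v_2\to\cdots\to v_m$. Among all Hamiltonian paths I would choose one extremal with respect to a suitable potential, for example one minimising the number of violating triples, or lexicographically minimising the lengths of the violating edges. I would then argue by contradiction. Suppose there is a violating triple $i<j<k$: that is, $\{v_i,v_k\}\in E(G)$ while $v_j$ is adjacent to neither $v_i$ nor $v_k$.

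The edge $\{v_i,v_k\}$ gives a chord of the path. This chord lets us either re-route the path into a Hamiltonian path with a smaller potential, contradicting the choice of path, or it exhibits a cut structure. In the second case I would take $S$ to be a minimal set of vertices separating $v_j$ from $\{v_i,v_k\}$ together with the remaining path segments. I would pass to an element of $\mathcal{C}(G)$ by repeatedly deleting vertices whose removal does not reconnect components, as in the proof of \cref{prop:scSets}. Finally I would count components to show $c(S)\geq\smallAbs{S}+2$, or that some element of $\mathcal{C}(G)$ has $c(S)\neq\smallAbs{S}+1$. Either conclusion contradicts unmixedness. The intended count is this: the chord $\{v_i,v_k\}$ lets the two ends of the path be joined while skipping the interval around $v_j$, which isolates an extra component beyond the count that traceability alone forces.

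The main obstacle is this middle step. It requires showing that every violation which cannot be removed by re-routing forces an element of $\mathcal{C}(G)$ whose component count is off by at least one. Re-routing arguments of this kind tend to branch into many cases according to which neighbours of $v_j$ lie inside $(v_i,v_k)$ and which lie outside. If the direct approach becomes unmanageable, I would use a different route to weak closure: verify that $\overline{G}$ admits a transitive orientation by checking Gallai's obstructions, or equivalently show that $G$ has a cocomparability ordering. Each forbidden configuration would then be tested against the condition $c(S)=\smallAbs{S}+1$ on small explicit cut sets; this is exactly how the claw-type and net-type examples of \cref{sec:KT} fail.
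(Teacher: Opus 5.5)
Your reduction is sound. For connected $G$ the hypotheses do become ``$G$ is traceable and $c(S)=\smallAbs{S}+1$ for all $S\in\mathcal{C}(G)$''. Note that the paper gives no argument for this statement: it imports it from LaClair--McCullough, where it sits as a corollary next to their $F$-purity result (their Proposition 10.3). The combinatorial statement you are really after, \cref{cor:wcTraceable}, is derived in the paper \emph{from} this theorem, not the other way round.

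Everything after your reduction is a plan rather than a proof, and the plan points in the wrong direction. Since $G$ is traceable, \cref{prop:scPiIneq} gives $c(S)-\smallAbs{S}\leq\pi(G)=1$ for \emph{every} $S\subseteq V(G)$. So your announced contradiction $c(S)\geq\smallAbs{S}+2$ can never occur, and your ``intended count'' (the chord isolating an extra component) tries to produce exactly what traceability forbids. In a traceable graph, unmixedness can only fail through some $S\in\mathcal{C}(G)$ with $c(S)\leq\smallAbs{S}$: a set of genuine cut vertices that leaves too \emph{few} components. For example, $S=\{1,3\}$ in $C_5$ gives $c(S)=2=\smallAbs{S}$.

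Your pruning step makes this worse. Deleting a vertex of $S$ that reconnects nothing raises $c(S)-\smallAbs{S}$ by at least one; this is exactly the computation in the proof of \cref{prop:scSets}. Pruning therefore pushes you towards $c(S)=\smallAbs{S}+1$ and destroys any deficit you had. The witness must be built directly inside $\mathcal{C}(G)$, with each of its vertices checked to be a cut vertex relative to the others. You give no construction of such a set from a violating triple. Nor do you specify the potential, or show that the non-cut case really admits a re-routing to a Hamiltonian path of smaller potential.

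Your fallback has the same hole. Gallai's list of minimal non-comparability graphs contains infinite families, so checking it is not a finite task. More importantly, the hypothesis is not hereditary. The cone over the claw is unmixed of K\"{o}nig type, yet its induced subgraph $K_{1,3}$ is not unmixed (see \cref{sec:KT}). So locating an obstruction $H$ as an induced subgraph of $G$ and testing ``small explicit cut sets'' inside $H$ proves nothing. You would have to show that $H$ forces a deficient $S\in\mathcal{C}(G)$ in the whole graph, which is the missing step again. Also, the claw is itself weakly closed (its complement $K_3\sqcup K_1$ is a comparability graph), so it is not a model for how obstructions to weak closure fail.

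As it stands, you have the right combinatorial reformulation but not the key idea. That idea is a mechanism turning a failure of weak closure in a traceable graph into a set $S\in\mathcal{C}(G)$ with $c(S)\leq\smallAbs{S}$.
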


This allows us to deduce our classification:
\unmixedCor*

\begin{proof}\label{prf:unmixed}
	This follows immediately from \cref{cor:WCKT} and \cref{thm:LM}.
\end{proof}

\begin{note}
	Just as we \hyperref[note:C4]{noted} following \cref{prop:HHMChar}, \cref{cor:unmixed} does \textit{not} say that $\mathcal{J}_G$ is unmixed of K\"{o}nig type if and only if $G$ is weakly closed. The same counterexample of $C_4$ demonstrates this, it is easily seen to be weakly closed given that same labelling.
\end{note}

As discussed in the \hyperref[sec:intro]{introduction}, \cref{cor:unmixed} has an algorithmic advantage over \cref{prop:HHMChar}. Checking whether a graph is traceable amounts to deciding if it admits a Hamiltonian path. \cite[p.~60]{gareyComputersIntractabilityGuide1979}, for example, demonstrates that this is NP-complete in general, and so the problem is computationally difficult. By contrast, \cite[Theorem 5.33]{golumbicAlgorithmicGraphTheory2004} shows that comparability graphs can be recognised in polynomial time. Since cocomparability graphs are, by definition, the complements of comparability graphs, these can be recognised in polynomial time also.

Combining the two characterisations yields a result which, on the face of it, appears to be totally unrelated to ideals of K\"{o}nig type.
\begin{corollary}\label{cor:wcTraceable}
	Let $G$ be a graph, and suppose that $\mathcal{J}_G$ is unmixed. Then $G$ is weakly closed if and only if every connected component of $G$ is traceable.
\end{corollary}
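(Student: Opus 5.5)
Under the standing hypothesis that $\mathcal{J}_G$ is unmixed, the statement should follow by chaining \cref{cor:unmixed} with \cref{prop:HHMChar}. Both results assume exactly this hypothesis and characterise the same property, namely $\mathcal{J}_G$ being of K\"{o}nig type. So the plan is to pass through ``$\mathcal{J}_G$ is of K\"{o}nig type'' as an intermediate condition.

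Concretely, fix $G$ with $\mathcal{J}_G$ unmixed.
\begin{enumerate}
	\item By \cref{cor:unmixed}, $G$ is weakly closed if and only if $\mathcal{J}_G$ is of K\"{o}nig type.
	\item By \cref{prop:HHMChar}, $\mathcal{J}_G$ is of K\"{o}nig type if and only if every connected component of $G$ is traceable.
\end{enumerate}
Composing these two equivalences gives the claim.

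For the forward direction one could instead argue directly via \cref{thm:main}. If $G$ is weakly closed, then $\pi(G)=\scstar(G)$ by \cref{thm:cocomKT} together with \cref{thm:WCCocom}. Unmixedness together with \cref{prop:scHeight} and \cref{lem:height} then forces $\pi$ to be $1$ on each component. This is essentially the content of \cite[Proposition 3.6]{herzogGradedIdealsKonig2022}, however, so it is cleaner to cite it.

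There is no genuine obstacle; the only care needed is to check that the unmixedness hypothesis is available for both cited results, which it is, since it is assumed throughout. The substantive input is the deep direction \cref{thm:LM} hidden inside \cref{cor:unmixed}, which we take as given.
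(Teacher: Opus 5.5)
Your proposal is correct and matches the paper's proof, which chains \cref{cor:unmixed} with \cref{prop:HHMChar} through the intermediate condition that $\mathcal{J}_G$ is of K\"{o}nig type, under the shared unmixedness hypothesis. Your optional direct argument for the forward direction is also sound but not needed: since $\varnothing\in\mathcal{C}(G)$, unmixedness forces $\scstar=1$ on each component.
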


\begin{proof}
	This follows immediately from \cref{cor:unmixed} and \cref{prop:HHMChar}.
\end{proof}

\begin{note}\label{note:C5}
	The unmixedness hypothesis is necessary in \cref{cor:wcTraceable}. For example, clearly $C_5$ is traceable, however its complement $\overline{C_5}\cong C_5$ is not a comparability graph: we cannot orient edges $u-v-w$ as $u\to v\to w$, since no edge $u-w$ exists, and so the orientation of each edge must alternate, which is impossible for a cycle of odd length. Then $C_5$ is not weakly closed.
\end{note}

It was \hyperref[note:almostAllKT]{noted} in \cref{subsec:KTExamples} that almost all graphs have binomial edge ideals of K\"{o}nig type. In the unmixed case, quite the opposite is true. This follows easily from the following lemma:
\begin{lemma}\label{lem:almostAllH}
	For any graph $H$, almost every graph contains an induced subgraph isomorphic to $H$. That is, letting $\mathcal{N}(n)$ denote the number of (unlabelled) graphs on $n$ vertices, and $\mathcal{H}(n)$ the number of (unlabelled) graphs on $n$ vertices containing an induced subgraph isomorphic to $H$, we have
	\begin{equation*}
		\lim_{n\to\infty}\frac{\mathcal{H}(n)}{\mathcal{N}(n)}=1
	\end{equation*}
\end{lemma}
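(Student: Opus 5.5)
The plan is to prove the statement first for \emph{labelled} graphs, using the uniform random graph model, and then transfer it to unlabelled graphs.

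\textbf{Labelled case.} Let $h=\smallAbs{V(H)}$. Consider $G(n,1/2)$, which is uniform over all $2^{\binom{n}{2}}$ labelled graphs on $\{1,\ldots,n\}$. Partition $\{1,\ldots,n\}$ into $\lfloor n/h\rfloor$ disjoint blocks of $h$ vertices each. For each block $B$, the induced subgraph $G[B]$ is uniformly distributed over labelled graphs on $B$. Fix a bijection $V(H)\to B$; then $G[B]$ is isomorphic to $H$ with probability at least $p\defeq 2^{-\binom{h}{2}}>0$. The edge sets inside distinct blocks are disjoint, so these events are independent. Hence
\begin{equation*}
	\Pr\bigl(G\text{ has no induced copy of }H\bigr)\leq(1-p)^{\lfloor n/h\rfloor},
\end{equation*}
which tends to $0$, and in fact does so exponentially fast in $n$.

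\textbf{Transfer to unlabelled graphs.} This is the step I expect to be the main obstacle. Each unlabelled graph on $n$ vertices corresponds to at most $n!$ labelled graphs, and this bound is attained exactly when the graph has trivial automorphism group. I would invoke the classical result of Erd\H{o}s and R\'enyi (equivalently, P\'olya's asymptotic count) that
\begin{equation*}
	\mathcal{N}(n)\sim\frac{2^{\binom{n}{2}}}{n!}.
\end{equation*}
The unlabelled graphs with no induced $H$ number at most the number of labelled such graphs. By the labelled case, that number is at most $2^{\binom{n}{2}}(1-p)^{\lfloor n/h\rfloor}$. Dividing by $\mathcal{N}(n)$ gives a ratio of roughly
\begin{equation*}
	n!\,(1-p)^{\lfloor n/h\rfloor},
\end{equation*}
which \emph{does not} tend to $0$. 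So the crude bound fails, and a sharper labelled estimate is needed.

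\textbf{Sharper labelled estimate.} I would instead use disjoint blocks much more efficiently. Choose $\binom{n}{h}$ subsets, or use a second-moment argument, or partition into about $n^2/h^2$ edge-disjoint copies via a packing of $h$-sets pairwise sharing at most one vertex. Such a packing exists with $\Theta(n^2)$ members by standard design or greedy arguments. The edge sets of these $h$-sets are pairwise disjoint, so the events are again independent. This gives a failure probability of at most $(1-p)^{cn^2}$, and now
\begin{equation*}
	n!\,(1-p)^{cn^2}\to0.
\end{equation*}
Therefore $1-\mathcal{H}(n)/\mathcal{N}(n)\to0$, as required.

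Alternatively, one can avoid the packing argument: almost all labelled graphs are asymmetric, and each asymmetric unlabelled graph corresponds to exactly $n!$ labellings. So the proportion of unlabelled graphs with a property differs from the labelled proportion by $o(1)$, and the transfer is immediate from the first estimate. I would present the packing route, since it is self-contained apart from the existence of the packing, and mention the asymmetry route as an alternative.
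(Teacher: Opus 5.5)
Your proof is correct, but your main route differs from the paper's. The paper cites Diestel's Proposition 11.3.1 for the labelled statement, which is exactly your first estimate with $\lfloor n/h\rfloor$ disjoint blocks. It then transfers to unlabelled graphs via Harary and Palmer's metatheorem, which ultimately rests on $\mathcal{N}(n)\sim 2^{\binom{n}{2}}/n!$, i.e.\ on almost all unlabelled graphs being asymmetric. That is your ``alternative'' route. Note, though, that ``almost all labelled graphs are asymmetric'' would not suffice there on its own: symmetric classes have fewer than $n!$ labellings, so you genuinely need the unlabelled asymptotic you quoted.

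Your packing route instead pushes the labelled failure probability down to $2^{-\Omega(n^2)}$, which swamps the factor $n!$. Then only the trivial bound $\mathcal{N}(n)\geq 2^{\binom{n}{2}}/n!$ is needed, and the appeal to P\'olya or Erd\H{o}s--R\'enyi becomes superfluous. This gives a completely elementary, self-contained proof, at the price of relying on a property-specific super-exponential rate. The metatheorem, by contrast, transfers any property that holds for almost all labelled graphs, at any rate.

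Your one hand-wave, the existence of the packing, is easily closed. Take $h\geq 2$ (the case $h=1$ is trivial). In a maximal family of $h$-sets pairwise meeting in at most one vertex, every $h$-set meets some member in at least two vertices. Each member does this for at most $\binom{h}{2}\binom{n-2}{h-2}$ of the $\binom{n}{h}$ $h$-sets, so the family has at least $2n(n-1)/(h^2(h-1)^2)$ members.

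You should also drop the second-moment suggestion. Chebyshev only gives a polynomially small failure probability, nowhere near the $1/n!$ you need.
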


\begin{proof}
	Applying \cite[Proposition 11.3.1]{diestelGraphTheory2017}, taking $p=\frac{1}{2}$ to obtain the uniform distribution on labelled graphs, proves the claim in the labelled case. We then immediately obtain the result for unlabelled graphs via \cite[Section 9.4, Metatheorem]{hararyGraphicalEnumeration1973}.
\end{proof}

\begin{proposition}\label{prop:almostNoUMKT}
	Almost no graphs have unmixed binomial edge ideals of K\"{o}nig type.
\end{proposition}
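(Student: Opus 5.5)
Using \cref{lem:almostAllH}, it suffices to find one graph $H$ such that containing $H$ as an induced subgraph rules out $\mathcal{J}_G$ being unmixed of K\"{o}nig type. By \cref{cor:unmixed} (equivalently \cref{thm:LM}), every graph with an unmixed binomial edge ideal of K\"{o}nig type is weakly closed, hence cocomparability by \cref{thm:WCCocom}. So the plan is to pick $H$ that is not cocomparability, and to check that cocomparability is closed under taking induced subgraphs. Then any graph containing an induced copy of $H$ is not cocomparability, so its binomial edge ideal cannot be unmixed of K\"{o}nig type.

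For $H$, the natural choice is $C_5$: \cref{note:C5} already shows that $C_5$ is not weakly closed, since $\overline{C_5}\cong C_5$ has no transitive orientation.

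The key step is heredity. Suppose $G$ is cocomparability and $H'$ is an induced subgraph of $G$ on vertex set $U$. Then $\overline{H'}$ is exactly the induced subgraph of $\overline{G}$ on $U$. Restricting a transitive orientation of $\overline{G}$ to edges within $U$ remains transitive: if $(u,v)$ and $(v,w)$ lie in $U$, then $(u,w)$ is an edge of $\overline{G}$ between vertices of $U$, so it lies in $\overline{H'}$. Alternatively, one can use \cref{def:WC} directly: restricting a weakly closed labelling to $U$ and relabelling order-preservingly keeps the defining property, because induced subgraphs keep exactly the edges among $U$.

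Putting this together, let $\mathcal{U}(n)$ be the number of unlabelled graphs on $n$ vertices whose binomial edge ideals are unmixed of K\"{o}nig type. Each such graph has no induced $C_5$, so $\mathcal{U}(n)\leq\mathcal{N}(n)-\mathcal{H}(n)$ with $H=C_5$. By \cref{lem:almostAllH}, $\mathcal{U}(n)/\mathcal{N}(n)\to0$.

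There is no serious obstacle. The only points needing care are the heredity argument and the fact that \cref{lem:almostAllH} is phrased for unlabelled graphs, which matches the counting used here.
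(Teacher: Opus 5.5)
Your proposal is correct and follows the paper's proof. Both arguments use the same steps: $C_5$ is not weakly closed, weakly closed (equivalently, cocomparability) graphs are closed under taking induced subgraphs, and \cref{lem:almostAllH} together with \cref{thm:LM} finishes. The only difference is that you prove the heredity directly, by restricting a transitive orientation of $\overline{G}$ or a weakly closed labelling, whereas the paper cites LaClair and McCullough's Proposition 10.7 for closure under vertex deletion.
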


\begin{proof}
	By \cite[Proposition 10.7]{laclairFpurityBinomialEdge2026}, the class of weakly closed graphs is closed under vertex deletion. We saw in the \hyperref[note:C5]{note} following \cref{cor:wcTraceable} that $C_5$ is not weakly closed, and so no weakly closed graph can contain $C_5$ as an induced subgraph. Then, by \cref{lem:almostAllH}, almost no graphs are weakly closed, and we are done by \cref{cor:unmixed}.
\end{proof}

This illustrates another aspect in which \cref{cor:unmixed} can be advantageous compared to \cref{prop:HHMChar}: traceable graphs are asymptotically common, whereas weakly closed graphs are asymptotically rare.

\pagebreak

\section{A Conjecture for AT-Free Graphs}\label{sec:ATFreeConj}

\begin{definition}
	Let $G$ be a graph. We say that three pairwise non-adjacent vertices of $G$ form an \emph{asteroidal triple} if, for each pair of these vertices, there exists a path connecting them which avoids the neighbourhood of the third.
\end{definition}

\begin{definition}\label{def:ATFree}
	We say that a graph $G$ is \emph{AT-free} if it contains no asteroidal triple.
\end{definition}

Binomial edge ideals of AT-free graphs have already appeared in the literature, for example \cite[Corollary 4.13]{laclairFpurityBinomialEdge2026}. We make the following conjecture:

\ATFreeConj*

\label{MacaulayEvidence}We have verified this conjecture using \citeMacaulay{} for all connected AT-free graphs with up to 9 vertices, a total of 95{,}869 graphs. The conjecture then holds for \textit{all} AT-free graphs with up to $9$ vertices.

Cocomparability graphs comprise a notable subclass of AT-free graphs (\cite[Theorem 4]{golumbicToleranceGraphs1984}), and so \cref{thm:cocomKT} lends further weight to \cref{conj:ATFreeKT}.

\label{ATFreeDiscussion}We do not expect to find a proof of this conjecture via a straightforward algorithm such as \cite[Algorithm 2.34]{williamsLFCoversBinomialEdge2023}. This algorithm showed that forests have binomial edge ideals of K\"{o}nig type by constructing an $\LF$-cover (see \cite[Definition 2.4]{williamsLFCoversBinomialEdge2023}) in polynomial time, which also computes $\LF(F)$ for any forest $F$.

Since a graph $G$ is traceable if and only if $\LF(G)=\smallAbs{V(G)}-1$, a similar algorithm in the case of AT-free graphs would, as a special case, settle a long-standing problem in graph theory: whether the Hamiltonian path problem for AT-free graphs is solvable in polynomial time. This problem was noted to be open as late as 2023 (\cite[p.~2]{gomezPathEccentricityGraphs2023}), and does not appear to have been resolved since.

The Hamiltonian path problem has, however, been shown to be solvable in quadratic time for cocomparability graphs (\cite[Corollary 1]{damaschkeFindingHamiltonianPaths1992}). We do not claim then that such an algorithm is impossible, but rather that it would likely require substantial graph-theoretic content.

\begin{note}
	Cocomparability graphs are a subclass of both AT-free graphs and perfect graphs (see \cite{brandstadtGraphClassesSurvey1999}), and there are many perfect graphs which are not AT-free (and vice versa). For example, $C_6$ is not AT-free since any three pairwise non-adjacent vertices form an asteroidal triple, but it is perfect since it is bipartite (again, see \cite{brandstadtGraphClassesSurvey1999}).

	Since it is traceable, it has a binomial edge ideal of K\"{o}nig type, and so one might be tempted to conjecture that all perfect graphs have binomial edge ideals of K\"{o}nig type as an alternative strengthening of \cref{thm:cocomKT}. This is, however, not the case.

	We saw in \cref{example:bipCE} that $\mathcal{J}_G$ is not of K\"{o}nig type for
	\begin{align*}
		G&=\quad\begin{tikzpicture}[mygraph,x=0.7cm,y=0.7cm,baseline={([yshift=-0.5ex]current bounding box.center)}]
      		\node(1) at (0,0) {};
			\node(2) at (1,0) {};
			\node(3) at (2,0) {};
			\node(4) at (3,0) {};
			\node(5) at (4,0) {};
			\node(6) at (0,1) {};
			\node(7) at (1,1) {};
			\node(8) at (2,1) {};
			\node(9) at (3,1) {};
			\node(10) at (4,1) {};
	   		\foreach \from/\to in {1/6,1/7,1/9,2/7,3/7,3/8,3/10,4/8,4/9,5/9,5/10}
       	 		\draw[-] (\from) -- (\to);
    	\end{tikzpicture}\\[0.3cm]
		&=\quad\begin{tikzpicture}[mygraph,x=0.7cm,y=0.4cm,baseline={([yshift=-0.5ex]current bounding box.center)}]
	       	\node(1) at (1,3) {};
			\node(2) at (0,0) {};
			\node(3) at (2,0) {};
			\node(4) at (2,2) {};
			\node(5) at (3,3) {};
			\node(6) at (0,3) {};
			\node(7) at (1,0) {};
			\node(8) at (2,1) {};
			\node(9) at (2,3) {};
			\node(10) at (3,0) {};
		    \foreach \from/\to in {1/6,1/7,1/9,2/7,3/7,3/8,3/10,4/8,4/9,5/9,5/10}
	   	     	\draw[-] (\from) -- (\to);
		\end{tikzpicture}
	\end{align*}
	As we just noted, bipartite graphs are also perfect, and so this alternative strengthening then fails.

	However, it is easily seen that $G$ is not AT-free (for example, the two vertices of degree 1 together with any of the vertices of degree 2 here form an asteroidal triple), and so this does not affect \cref{conj:ATFreeKT}.
\end{note}

\section*{Acknowledgements}

The author is grateful to Dr Adam LaClair for his helpful comments on an \href{https://arxiv.org/abs/2605.24933v1}{earlier version} of this paper.

Furthermore, the author wishes to acknowledge the use of generative AI in exploring works related to binomial edge ideals of K\"{o}nig type. In particular, the author thanks OpenAI's GPT-5.5 for suggesting investigating path covering and scattering numbers, and Anthropic's Claude Opus 4.7 for bringing \cite{deogun1ToughCocomparabilityGraphs1997} to their attention. All mathematical content and prose is the author's own.

\newpage

\nocite{M2}
\printbibliography

\end{document}